\newtheorem{thm}{Theorem}
\newtheorem{lem}{Lemma}
\theoremstyle{definition}
\newtheorem{dfn}{Definition}
\theoremstyle{remark}
\newtheorem{rem}{Remark}
\pgfplotsset{compat=1.17}
\setlist[enumerate]{label=\normalfont(\alph*)}
\title{Topology of Toric Gravitational Instantons}
\author{Gustav Nilsson\thanks{\href{mailto:gustav.nilsson@aei.mpg.de}{gustav.nilsson@aei.mpg.de}}\\Max Planck Institute for Gravitational Physics (Albert Einstein Institute)\\Am Mühlenberg 1, D-14476 Potsdam, Germany}
\date{}
\begin{document}
\maketitle
\begin{abstract}
For an asymptotically locally Euclidean (ALE) or asymptotically locally flat (ALF) gravitational instanton $(M,g)$ with toric symmetry, we express the signature of $(M,g)$ directly in terms of its rod structure. Applying Hitchin--Thorpe-type inequalities for Ricci-flat ALE/ALF manifolds, we formulate, as a step toward a classification of toric ALE/ALF instantons, necessary conditions that the rod structures of such spaces must satisfy. Finally, we apply these results to the study of rod structures with three turning points.
\end{abstract}
\section{Introduction}
Ricci-flat Riemannian $4$-manifolds are the vacuum solutions to the Riemannian analog of the Einstein equations. Such manifolds occur in the study of quantum gravity, commonly under the name \emph{gravitational instantons} \cite{MR465052}. In this and many other contexts, it is common to restrict attention to complete and non-compact manifolds whose curvature decays sufficiently fast. The volume growth is then either quartic (ALE), cubic (AF/ALF), quadratic (ALG), or linear (ALH). An ALE manifold has a metric that is asymptotic to $\mathbb{R}^4/\Gamma$, where $\Gamma$ is a finite subgroup of $\mathrm{SO}(4)$ which acts freely on $S^3$. An ALF manifold, on the other hand, is asymptotic to a circle bundle over $\mathbb{R}^3$ or $\mathbb{R}^3/\mathbb{Z}_2$.

The classification problem for gravitational instantons is an interesting problem in which many conjectures have been made. For instance, Gibbons conjectured in 1979 that the only AF gravitational instantons are the Riemannian Schwarzschild and Riemannian Kerr spaces \cite{MR582633}, a conjecture which was proven wrong by Chen and Teo in the 2011 paper \cite{MR2831866}, where they presented a new AF gravitational instanton.

The examples mentioned so far are all \emph{toric}, i.e.,\ admit an effective action of the torus $T^2=\mathrm{U}(1)\times\mathrm{U}(1)$ by isometries, {and are simply connected, and we will refer to gravitational instantons with these properties as \emph{toric gravitational instantons}.} Toric gravitational instantons were studied in the paper \cite{MR2124693}. In that paper, the author introduced the formalism of so-called \emph{rod structures} (defined in detail in Section \ref{sec:rod_structure}), which are combinatorial objects associated with toric gravitational instantons, containing information about the $T^2$-action. Later, the rod structure formalism was used in \cite{MR2434746,MR2659115}.

The rod structure of a toric gravitational instanton $(M,g)$ with $n$ fixed points consists of a sequence of $2$-vectors $(v_0,\dots,v_n)$ with integer entries, satisfying \begin{equation}
\det\mqty(v_{i-1}&v_{i})=1,
\label{eq:determinant_normalization}
\end{equation} a condition which follows from the regularity of the instanton.

Much effort has been devoted to constructing solutions of the Einstein equations with toric symmetry. One example of such a method is the \emph{soliton method} \cite{MR1839019,vsrqyzhbacozoxogvmuo}, which was used to construct the Chen--Teo instanton (the AF gravitational instanton discussed in \cite{MR2831866}). Another method is given in the more recent paper \cite{uguvpxtckkesrlwjgulj}. Despite much progress, it is, in general, a hard problem to determine whether a rod structure can be realized as the rod structure of a toric gravitational instanton.

The rod structure of $M$ can be shown to determine its topology. In particular, the Euler characteristic is well-known to equal the number of fixed points, $n$. In this paper, we determine the signature in terms of the rod structure. {We do so by directly reconstructing $M$ as a manifold with a $T^2$-action from its rod structure. In terms of this explicit reconstruction, we determine a basis for $H_2(M)$,\footnote{Here $H_k(M)$ refers to the $k$:th \textbf{singular} homology group (with \textbf{integral} coefficients).}} and then compute the intersection form in terms of this basis. As a result, we obtain the following theorem.

\begin{thm}
Let $(M,g)$ be a toric gravitational instanton with rod structure $(v_0,\dots,v_n)$. Then $H_2(M)\cong\mathbb{Z}^{n-1}$, and $H_2(M)$ admits a basis\footnotemark\ in which the intersection form on $M$ is represented by the matrix \begin{equation}\mqty(d_1&1&0&\hdots&0&0&0\\1&d_2&1&\hdots&0&0&0\\0&1&d_3&\hdots&0&0&0\\\vdots&\vdots&\vdots&\ddots&\vdots&\vdots&\vdots\\0&0&0&\hdots&d_{n-3}&1&0\\0&0&0&\hdots&1&d_{n-2}&1\\0&0&0&\hdots&0&1&d_{n-1}),\end{equation} where $d_i=-\det\mqty(v_{i-1}&v_{i+1})$. In particular, the signature $\tau(M)$ is the signature of this matrix.
\label{thm:rod_structure_intersection_form}
\end{thm}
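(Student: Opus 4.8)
The plan is to work entirely with the explicit reconstruction of $M$ as a $T^2$-manifold over its orbit space $B$, which is a closed half-plane whose bounding axis is subdivided by the $n$ turning points $z_1<\dots<z_n$ into the $n+1$ rods: over the interior of the rod along which $v_i$ collapses, the fibre of the orbit map $\pi\colon M\to B$ is the circle $T^2/\langle v_i\rangle$, while over each turning point $z_i$ there is a single fixed point, near which $M$ is $T^2$-equivariantly diffeomorphic to a neighbourhood of the origin in $\mathbb{C}^2$, with the two adjacent rods corresponding to the two coordinate axes. For each of the $n-1$ \emph{compact} rods, let $\Gamma_i=[z_i,z_{i+1}]$ be the rod along which $v_i$ collapses and put $S_i:=\pi^{-1}(\Gamma_i)$. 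Since $S_i$ is an interval-family of circles $T^2/\langle v_i\rangle$ with the two end circles collapsed --- equivalently, two coordinate discs coming from the $\mathbb{C}^2$-charts at $z_i$ and $z_{i+1}$, glued along their common equatorial circle --- it is an embedded $2$-sphere, and the $[S_i]$ are the candidate basis elements.

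Next I would establish $H_2(M)\cong\mathbb{Z}^{n-1}$ with $\{[S_i]\}_{i=1}^{n-1}$ a basis by Mayer--Vietoris applied to $U=\pi^{-1}(\text{open collar of the axis})$ and $V=\pi^{-1}(\mathrm{int}\,B)$. Here $V$ and $U\cap V$ are (contractible)$\times T^2$, hence homotopy equivalent to $T^2$, whereas $U$ deformation retracts onto $\pi^{-1}(\text{axis})$, which is the chain of spheres $S_1,\dots,S_{n-1}$ (consecutive ones meeting at a fixed point) together with two contractible discs over the semi-infinite rods, hence homotopy equivalent to $\bigvee^{n-1}S^2$. The inclusion $U\cap V\hookrightarrow V$ is a homotopy equivalence, and a torus fibre bounds a solid torus in $U$ (push it to a rod, collapsing one of its circles), so the maps $H_1(U\cap V)\to H_1(U)\oplus H_1(V)$ and $H_2(U\cap V)\to H_2(U)\oplus H_2(V)$ are both injective; the exact sequence then collapses to give $H_1(M)=0$ (consistent with simple connectedness), $H_k(M)=0$ for $k\ge 3$, and $H_2(M)\cong H_2(U)\cong\mathbb{Z}^{n-1}$ generated by the $[S_i]$.

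It remains to compute $[S_i]\cdot[S_j]$. If $|i-j|\ge 2$, the rods $\Gamma_i$ and $\Gamma_j$ are disjoint, so $S_i\cap S_j=\varnothing$ and the product vanishes. If $j=i+1$, the spheres meet only at the fixed point over $z_{i+1}$, where in the local $\mathbb{C}^2$-chart they are the two coordinate axes and hence transverse, so $[S_i]\cdot[S_{i+1}]=\pm1$; one then fixes the orientations of the $S_i$ so that this sign is $+1$ for every $i$. For the self-intersection I would compute the Euler number of the normal bundle $\nu(S_i,M)$ by clutching its restrictions to the two coordinate discs of $S_i$ along an equatorial circle lying over the interior of $\Gamma_i$: $T^2$-equivariance forces the transition between the $\mathbb{C}^2$-charts at $z_i$ and $z_{i+1}$, and the relation $v_{i-1}+v_{i+1}=-d_i\,v_i$ --- an immediate consequence of $\det(v_{i-1},v_i)=\det(v_i,v_{i+1})=1$ with $d_i=-\det(v_{i-1},v_{i+1})$ --- makes the resulting transition function have winding number $-d_i$ about the origin, so that $[S_i]^2$, the Euler number of $\nu(S_i,M)$, equals $d_i$. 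Combining the three cases produces exactly the stated tridiagonal matrix, whose signature is $\tau(M)$ by definition of the intersection form on $H_2(M)$.

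I expect the self-intersection computation, together with the bookkeeping required to orient all of the $S_i$ coherently (so that every off-diagonal entry comes out $+1$ rather than $-1$), to be the main technical point; the Mayer--Vietoris step and the disjoint/transverse cases are routine. As a check, the formula reproduces the intersection forms in standard examples --- e.g.\ for Eguchi--Hanson space ($n=2$, rod structure $(1,0),(1,1),(1,2)$) it gives the single entry $d_1=-\det\!\big((1,0),(1,2)\big)=-2$ --- and it agrees with the classical description of the homology and intersection form of a smooth toric surface in terms of its fan, to which the reconstruction reduces in a neighbourhood of the fixed points.
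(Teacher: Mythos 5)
Your argument is correct and has the same skeleton as the paper's proof --- the same spheres over the compact rods, a Mayer--Vietoris argument to show their classes form a basis, and the same three-case computation of the pairing --- but both technical steps are carried out by a genuinely different route. For the basis, the paper glues $M$ from $n$ explicit charts $M_i\cong\mathbb{C}^2$ and runs an iterated Mayer--Vietoris on $M_1\cup\dots\cup M_i$ and $M_i\cup M_{i+1}$, using only that each chart is contractible and each overlap is a homotopy circle; your single collar-versus-interior decomposition is more economical but leans on the (true, standard) facts that $\pi^{-1}(\text{collar})$ deformation retracts equivariantly onto $\pi^{-1}(\text{axis})\simeq\bigvee^{n-1}S^2$ and that the interior preimage is a trivial $T^2$-bundle. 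For the self-intersection, you compute the Euler number of the normal bundle from a clutching function whose winding number is read off from $v_{i-1}+v_{i+1}=-d_iv_i$, whereas the paper perturbs $R_i$ to an explicit homologous surface $R_i'=\{z_2=f(z_1)\}\cup\{z_1=1\}$ with $f(z)=z^{d_i}-3^{-d_i}$ near the origin and counts $d_i$ signed transverse intersection points ($f$ holomorphic for $d_i\geq0$, antiholomorphic for $d_i<0$). The two computations are equivalent --- the perturbation is precisely a section of your normal bundle with the prescribed clutching data --- but the explicit section pins down the sign that the winding-number convention leaves ambiguous, which is exactly the point you flag as delicate; it also spares you from justifying that equivariance alone determines the clutching degree (one must track the weights of the isotropy action on the normal fibres, which the paper's explicit gluing maps do automatically). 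Your Eguchi--Hanson check giving $d_1=-2$ against the signature $+1$ in the paper's table is a symptom of this orientation convention, not an error: the theorem's signature is taken with respect to the orientation fixed by the reconstruction and the normalization $\det\mqty(v_{i-1}&v_i)=1$.
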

\footnotetext{{More precisely, a basis in the sense of $\mathbb{Z}$-modules.}}

Theorem \ref{thm:rod_structure_intersection_form} is then used together with Hitchin--Thorpe-type inequalities for ALE or ALF instantons to give necessary conditions on the possible rod structures. The Hitchin--Thorpe inequality for Ricci-flat ALE $4$-manifolds states that \begin{equation}2\qty(\chi(M)-\frac{1}{|\Gamma|})\geq 3|\tau(M)+\eta_S(S^3/\Gamma)|,\end{equation} and for Ricci-flat ALF $4$-manifolds asymptotic to a circle bundle over $\mathbb{R}^3$, the Hitchin--Thorpe inequality reads \begin{equation}2\chi(M)\geq 3\qty|\tau(M)-\frac{e}{3}+\operatorname{sgn}(e)|,\end{equation} where $e$ is the Euler number of the asymptotic circle bundle. Both $|\Gamma|$ and $e$ can be expressed in terms of the rod structure, and as a consequence, these inequalities and Theorem \ref{thm:rod_structure_intersection_form} lead to restrictions on the possible rod structures.

The particular case of rod structures with three turning points was discussed in \cite{MR2659115}, where the question was raised whether topological or other constraints could perhaps rule out certain rod structures. We address this question with the following result.

\begin{thm}
{Let $(M,g)$ be a toric ALE instanton with the rod structure shown in Figure \ref{fig:three_turning_points_rod_structure}. Then either $(a,b)=\pm(2,2)$, or $ab<0$ and $|a+b|\leq 6$ (as depicted\footnote{The meaning of the different colors will be explained in Section \ref{sec:three_turning_points}.} in Figure \ref{fig:three_turning_points_ALE_possibilities}).} If instead $M$ is ALF, then either $a=0$, $b=0$, or $(a,b)$ is one of the pairs in Figure \ref{fig:three_turning_points_ALF_possibilities}.
\label{thm:three_turning_points_possibilities}
\end{thm}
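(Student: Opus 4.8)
The plan is to make Theorem \ref{thm:rod_structure_intersection_form} explicit for this two-parameter family, feed the resulting signature into the two Hitchin--Thorpe-type inequalities quoted in the introduction, and read off which $(a,b)$ survive.

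First I would fix a convenient $SL(2,\mathbb{Z})$ frame and write the rod structure of Figure \ref{fig:three_turning_points_rod_structure} as an explicit list $(v_0,\dots,v_n)$; the regularity condition \eqref{eq:determinant_normalization} then determines every entry in terms of the two integers $a$ and $b$. From this list I compute the numbers $d_i=-\det\mqty(v_{i-1}&v_{i+1})$ of Theorem \ref{thm:rod_structure_intersection_form}, assemble the tridiagonal intersection matrix, and extract $\tau(M)$ as a function of $(a,b)$; this function is piecewise constant, jumping across the loci where the relevant leading principal minors change sign, so the $(a,b)$-plane splits into finitely many ``sign cones'' on each of which $\tau$ is constant. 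Meanwhile $\chi(M)=n$ is a fixed small integer along the whole family, so the left-hand sides of both Hitchin--Thorpe inequalities are essentially constant.

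Next I would compute the asymptotic invariants. In the ALE case the asymptotic group $\Gamma$ is the finite abelian --- hence, acting freely on $S^3$, cyclic --- group $\mathbb{Z}^2/\langle v_0,v_n\rangle$, so $|\Gamma|=\left|\det\mqty(v_0&v_n)\right|$, and $S^3/\Gamma$ is a lens space whose type $(p;q)$ I read off from the rod vectors by putting the cone $\langle v_0,v_n\rangle$ into Hirzebruch--Jung normal form; its signature defect $\eta_S(S^3/\Gamma)$ is then the corresponding Dedekind sum, which I evaluate as a function of $(a,b)$ (reciprocity keeps the expressions manageable and exhibits the symmetry $a\leftrightarrow b$). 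In the ALF case I instead extract the Euler number $e$ of the asymptotic circle bundle from the rod data near the relevant end. Substituting $\chi$, $\tau(a,b)$ and the asymptotic data into $2\bigl(\chi(M)-\tfrac1{|\Gamma|}\bigr)\geq 3\left|\tau(M)+\eta_S(S^3/\Gamma)\right|$ (resp.\ into $2\chi(M)\geq 3\left|\tau(M)-\tfrac e3+\operatorname{sgn}(e)\right|$) turns each sign cone into an explicit inequality in $a$ and $b$. One then handles the degenerate loci separately: when $|\Gamma|=1$ (equivalently $M$ asymptotic to $\mathbb{R}^4$), the classical rigidity of Ricci-flat ALE manifolds forces $M$ flat, contradicting $\chi(M)=n>1$, so those $(a,b)$ are excluded in the ALE case; in the ALF case the analogous locus $a=0$ or $b=0$ genuinely survives. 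Away from these loci the bounded left-hand side confines $(a,b)$: in the ALE case the indefinite cone ($ab<0$, $\tau=0$) survives exactly when the Dedekind sum is small enough, which works out to $|a+b|\leq 6$, while the definite cones leave only the Hitchin--Thorpe-saturating pairs $(a,b)=\pm(2,2)$ (the $A_2$ ALE space and its orientation reversal); in the ALF case, since $e$ is itself roughly the determinant $ab-1$, the inequality bounds $|ab|$ and hence leaves only finitely many pairs besides the two lines $a=0$, $b=0$. Matching all of this against Figures \ref{fig:three_turning_points_ALE_possibilities} and \ref{fig:three_turning_points_ALF_possibilities} completes the proof.

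The main obstacle is the bookkeeping in the last step. The quantity $\left|\tau(M)+\eta_S(S^3/\Gamma)\right|$ depends on $(a,b)$ both through the signature of the tridiagonal matrix, which jumps, and through a Dedekind sum attached to a lens space whose own parameters vary with $(a,b)$; so the ALE Hitchin--Thorpe inequality is really a family of inequalities, one per sign cone, each needing a separate and orientation-sensitive Dedekind-sum estimate. Getting the orientation conventions exactly right --- so that, for example, $\pm(2,2)$ is retained while neighbouring pairs such as $\pm(1,3)$ and $\pm(1,4)$ are excluded --- and verifying that the resulting thresholds are sharp (the clean bound $|a+b|\leq 6$ in the ALE indefinite range, and the precise list in the ALF case) is where the real work lies.
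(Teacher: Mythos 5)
Your proposal follows essentially the same route as the paper: compute $\tau(M)$ from the tridiagonal intersection matrix of Theorem \ref{thm:rod_structure_intersection_form}, read off $(p,q)$ and hence $\eta_S(L(p,q))$ (your Dedekind sums are equivalent to the explicit floor-sum formula the paper evaluates case by case), exclude $p\leq 1$ via positive mass/rigidity, and feed everything into the ALE and ALF Hitchin--Thorpe inequalities, with the ALF bound coming from $|e|=|1-ab|$ controlling $|ab|$. Your ``sign cones'' are exactly the paper's case split ($ab<0$ with $\tau=0$ versus the definite cases with $\tau=\pm 2$), and the conclusions you anticipate match the paper's.
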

\begin{figure}
\centering
\includegraphics[width=\textwidth]{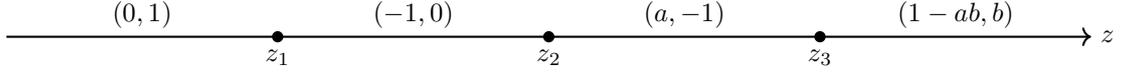}
\caption{A rod structure with three turning points.}
\label{fig:three_turning_points_rod_structure}
\end{figure}
\begin{figure}
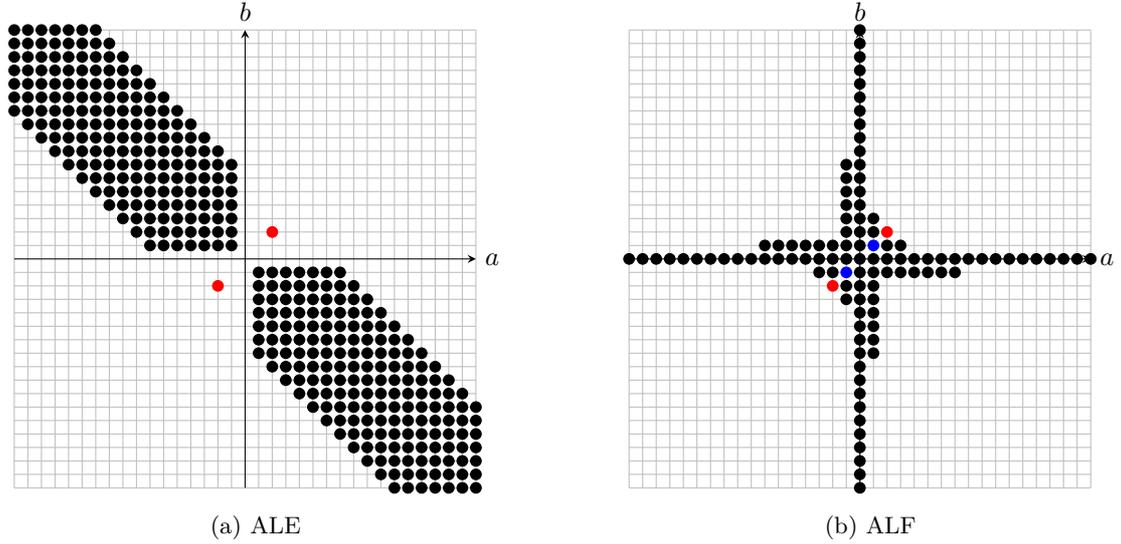

\centering
\begin{subfigure}{0.45\textwidth}
\includegraphics[width=\linewidth]{three_turning_points_ALE_possibilities.tikz}
\caption{ALE}
\label{fig:three_turning_points_ALE_possibilities}
\end{subfigure}\hfill
\begin{subfigure}{0.45\textwidth}
\includegraphics[width=\linewidth]{three_turning_points_ALF_possibilities.tikz}
\caption{ALF}
\label{fig:three_turning_points_ALF_possibilities}
\end{subfigure}
\caption{The remaining possibilities for $(a,b)$.}
\label{fig:three_turning_points_possibilities}
\end{figure}
In this paper, we have only used the assumption of Ricci-flatness minimally through the Hitchin--Thorpe inequality. Using the full set of field equations, for example as in \cite{uguvpxtckkesrlwjgulj}, one could potentially use knowledge about the topology to prove certain uniqueness results. For instance, we expect that the Riemannian black hole uniqueness conjecture in the toric case could be approached this way.
\subsection*{Overview of this paper}
In Section \ref{sec:toric_gravitational_instantons}, we give the definition of toric gravitational instanton, discuss the possible asymptotic geometries, and introduce the concept of rod structures. In Section \ref{sec:examples}, we look at some examples of known toric gravitational instantons and list their rod structures, among other properties. In Section \ref{sec:topological_conditions}, we compute the signature of a toric gravitational instanton in terms of its rod structure and then use this together with Hitchin--Thorpe-type inequalities to formulate necessary conditions on possible rod structures. Finally, in Section \ref{sec:three_turning_points}, we apply this to the case of rod structures with three fixed points and briefly discuss the case of an AF rod structure with four fixed points.
\subsection*{Acknowledgments}
The author would like to thank Mattias Dahl, Lars Andersson, Eric Ling, Steffen Aksteiner, Walter Simon, and Hans-Joachim Hein for their helpful comments and discussion. This research was supported by the IMPRS for Mathematical and Physical Aspects of Gravitation, Cosmology and Quantum Field Theory at the Max Planck Institute for Gravitational Physics.
\section{Toric Gravitational Instantons}\label{sec:toric_gravitational_instantons}
In this section, we introduce the concept of a gravitational instanton, the main concept of this paper's study. We also introduce the various possibilities for the asymptotic behavior of gravitational instantons. Then, we define toric gravitational instantons and introduce the concept of a rod structure.
\begin{dfn}\label{dfn:gravitational_instanton}
A \emph{gravitational instanton} is a Riemannian $4$-manifold $(M,g)$ which is non-compact, complete and Ricci-flat, satisfying the following.
\begin{enumerate}
\item There exists a nonincreasing function $K:[0,\infty)\to[0,\infty)$ such that \begin{equation}\int_1^\infty\frac{K(s)}{s}\dd{s}<\infty,\end{equation} along with a point $p\in M$ such that \begin{equation}\lvert\operatorname{Rm}_q\rvert_g\leq\frac{K(d(p,q))}{d(p,q)^2}\end{equation} for all $q\in M$, where $d(\cdot,\cdot)$ is the Riemannian distance function.\label{dfn:gravitational_instanton_1}
\item There exists a constant $\kappa>0$, a function $\epsilon:(0,\infty)\to(0,\infty)$ satisfying $\lim_{r\to0}\epsilon(r)=0$, along with a compact set $K\subseteq M$ such that for any $r>0$ and for any geodesic loop $\gamma$ whose length is less than $\kappa r$, and which is based at a point in $M\setminus K$, the holonomy of $\gamma$ rotates any vector by at most the angle $\epsilon(r)$.\label{dfn:gravitational_instanton_2}
\end{enumerate}
\end{dfn}
\begin{rem}
We do not require $(M,g)$ to be hyper-Kähler (see Definition \ref{dfn:hyper_kaehler}), an assumption which is sometimes included in the definition of gravitational instanton.
\end{rem}
From the conditions \ref{dfn:gravitational_instanton_1} and \ref{dfn:gravitational_instanton_2} in Definition \ref{dfn:gravitational_instanton}, it follows (see \cite{4797854091}) that $(M,g)$ has must have one out of several special geometries near infinity. The different cases are divided into ALE, ALF, ALG, and ALH depending on the volume growth, and the possible subcases \cite{4797854091} are shown in Table \ref{tab:asymptotic_geometries}. Here, we say that the \emph{boundary at infinity} of $M$ is $N$, if there exists a compact set $K\subseteq M$ such that $\overline{M\setminus K}$ is homeomorphic to $[0,\infty)\times N$, and if the homeomorphism can be chosen to map $\partial K$ onto $\{0\}\times N$. {Although the methods used in the proof of the following lemma are standard, we write down a detailed proof here since the details will be needed to obtain a slightly stronger version of the result in later sections.

\begin{lem}
The boundary at infinity is well-defined up to $h$-cobordism. In other words, if $N_1$ and $N_2$ are closed $3$-manifolds, and if they are both the boundary at infinity of $M$, then there is an $h$-cobordism between $N_1$ and $N_2$.
\label{lem:boundary_at_infinity_well_defined}
\end{lem}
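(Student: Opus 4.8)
The plan is to show that the boundary at infinity is unique up to a cobordism which is, in a suitable sense, built out of a cylinder, and then invoke the $h$-cobordism criterion. The starting observation is that if $N_1$ and $N_2$ are both boundaries at infinity of $M$, then there are compact sets $K_1, K_2 \subseteq M$ with $\overline{M \setminus K_i}$ homeomorphic to $[0,\infty) \times N_i$ respecting the identification of the topological boundary. By enlarging, we may assume $K_1 \subseteq \mathrm{int}(K_2)$ (or vice versa); then $W := \overline{K_2 \setminus K_1}$ is a compact $4$-manifold whose boundary is the disjoint union of $\partial K_1 \cong N_1$ and $\partial K_2 \cong N_2$, i.e.\ $W$ is a cobordism from $N_1$ to $N_2$. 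The content of the lemma is that this $W$ is an $h$-cobordism, meaning both inclusions $N_i \hookrightarrow W$ are homotopy equivalences.

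First I would reduce the two inclusions to a single one. Since $\overline{M \setminus K_1} \cong [0,\infty) \times N_1$ deformation retracts onto $\{0\} \times N_1 = \partial K_1$, and since $W$ sits inside $\overline{M \setminus K_1}$ as the region "before'' $\partial K_2$, one sees that $W$ itself deformation retracts onto $\partial K_1$: the retraction of the half-cylinder restricts appropriately because $\partial K_2$ is a compact subset and can be pushed inward along the cylinder coordinate. Hence $N_1 \hookrightarrow W$ is a homotopy equivalence (in fact $W$ is homeomorphic to $[0,1] \times N_1$). It remains to show $N_2 \hookrightarrow W$ is also a homotopy equivalence. Running the same argument with the roles of $K_1$ and $K_2$ swapped — i.e.\ using the collar $\overline{M \setminus K_2} \cong [0,\infty)\times N_2$ — shows that the complementary piece $\overline{M \setminus K_2}$ together with $W$ forms $\overline{M\setminus K_1}\cong[0,\infty)\times N_1$, and a Mayer--Vietoris / van Kampen comparison (or directly: $W \cup_{\partial K_2} \overline{M\setminus K_2}$ is a half-cylinder on $N_1$, while $\overline{M\setminus K_2}$ is a half-cylinder on $N_2$) forces $N_2 \hookrightarrow W$ to induce isomorphisms on $\pi_1$ and on all homology groups. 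By Whitehead's theorem this inclusion is then a homotopy equivalence, so $W$ is an $h$-cobordism.

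The one subtlety I want to handle carefully — and which I expect to be the main technical obstacle — is the arrangement of the compact sets. A priori the two given compact sets $K_1, K_2$ need not be nested, nor need their boundaries be disjoint or even be manifolds. So the first genuine step is: given the two collar structures at infinity, produce a single compact smooth (or topological) submanifold-with-boundary $K_1' \supseteq K_1$ contained in the interior of the half-cylinder chart for $N_2$, so that $\partial K_1'$ is a "slice'' $\{t\} \times N_1$ and simultaneously lies in $\{s \ge s_0\} \times N_2$ for some $s_0$; symmetrically enlarge to get $K_2'$. Concretely, one chooses a level $t_1$ so large that $\{t_1\}\times N_1$ lies entirely inside the $N_2$-collar, sets $K_1' = M \setminus ((t_1,\infty)\times N_1)$, and does the analogous thing for $N_2$ with a level $t_2$ large enough that $\{t_2\}\times N_2$ lies inside $(t_1,\infty)\times N_1$; such $t_2$ exists because $\{t_1\}\times N_1$ is compact and the $N_1$-slices exhaust the end. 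Then $K_1' \subseteq \mathrm{int}(K_2')$, both have nice boundaries, and the argument above applies verbatim. This is the kind of "push everything far enough out along the end'' manipulation that is routine but does need to be written down with the cylinder coordinates explicit — which, as the paper notes, is precisely the level of detail that will be reused later to strengthen the statement.
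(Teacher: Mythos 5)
Your overall architecture matches the paper's: arrange the two collar neighborhoods so that $K_1\subseteq\operatorname{int}(K_2)$, take $W$ to be the compact region in between, and show that both boundary inclusions are homotopy equivalences; your paragraph on pushing the compact sets far enough out along the end is also essentially what the paper does. However, both halves of your verification that $W$ is an $h$-cobordism have genuine gaps. For $N_1\hookrightarrow W$: the radial deformation retraction of $[0,\infty)\times N_1$ onto $\{0\}\times N_1$ does \emph{not} obviously restrict to $W$, because the outer boundary of $W$ is an $N_2$-slice sitting inside the $N_1$-cylinder with no control on its $t$-coordinate; a vertical segment $\{x\}\times[t_1,t]$ may leave and re-enter $W$, so $W$ need not be invariant under decreasing $t$. (The parenthetical claim that $W\cong[0,1]\times N_1$ is certainly unjustified --- it would immediately give $N_1\cong N_2$, far more than the lemma asserts.) The paper's fix is to enlarge $W$ so that it contains a genuine product piece $\Phi_2^{-1}([0,R_2]\times N_2)$ of the \emph{other} collar, collapse that piece onto $\partial K_2$ first (legitimate because that piece really is a product), land inside a product piece $\Phi_1^{-1}([0,R_1]\times N_1)$ of the first collar contained in $W$, and only then collapse along the $N_1$-coordinate; equivalently, show $\overline{M\setminus K_1}$ deformation retracts onto $W$ by collapsing the outer $N_2$-half-cylinder and use two-out-of-three.

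The second gap is more serious: the Mayer--Vietoris/van Kampen comparison does not pin down the map induced by $N_2\hookrightarrow W$. Writing $X=\overline{M\setminus K_1}$, $C=\overline{M\setminus K_2}$ and $N_2=W\cap C$, exactness of the Mayer--Vietoris sequence gives $k_*i_*=l_*j_*$, where $j_*:H_n(N_2)\to H_n(C)$ and $k_*:H_n(W)\to H_n(X)$ are isomorphisms and $l_*:H_n(C)\to H_n(X)$ is induced by the inclusion of the smaller end-neighborhood into the larger one. Hence $i_*$ is an isomorphism if and only if $l_*$ is --- and the latter is essentially the statement being proved; knowing the abstract homotopy types of $X$, $W$, $C$ and $N_2$ does not determine these maps. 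The missing input is an interleaving of the two families of collars ($N_1$-collar $\supseteq$ $N_2$-collar $\supseteq$ smaller $N_1$-collar $\supseteq\cdots$), which is exactly what the paper's chain of composed deformation retractions implements. Finally, even granted isomorphisms on $\pi_1$ and integral homology, Whitehead's theorem in that form fails for non-simply-connected spaces (and the $N_i$ here are typically lens spaces or $S^2\times S^1$); one would need homology with local coefficients, whereas the deformation-retraction route sidesteps this entirely.
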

\begin{proof}
For $i=1,2$, let $K_i\subseteq M$ be compact and let $\Phi_i:\overline{M\setminus K_i}\to[0,\infty)\times N_i$ be a homeomorphism such that $\Phi_i(\partial K_i)=\{0\}\times N_i$. Without loss of generality, assume that $K_1\subseteq K_2$ (otherwise, replace $K_2$ by $K_2\cup\Phi_2^{-1}([0,R]\times N_2)$, for large enough $R$).

Now pick $R_1$ such that $\overline{K_2\setminus K_1}\subseteq\Phi_1^{-1}([0,R_1])$, and then pick $R_2$ such that $\Phi_1^{-1}([0,R_1]\times N_1)\subseteq\Phi_2^{-1}([0,R_2]\times N_2)$. Then $W:=\overline{\Phi_2^{-1}([0,R_2])\setminus K_1}$ is a $4$-manifold with boundary. The boundary $\partial W$ is the disjoint union of $\Phi_1^{-1}(\{0\}\times N_1)\cong N_1$ and $\Phi_2^{-1}(\{R_2\}\times N_2)\cong N_2$, so that $W$ is a cobordism between $N_1$ and $N_2$.

Using the fact that $[0,R_2]\times N_2$ deformation retracts onto $\{0\}\times N_2$, we can use the map $\Phi_2$ to define a deformation retraction from $W$ to $\overline{K_2\setminus K_1}$, and similarly, the fact that $[0,R_1]\times N_1$ deformation retracts onto $\{0\}\times N_1$, implies that $\Phi_1^{-1}([0,R_1]\times N_1)$ deformation retracts onto $\Phi_1^{-1}(\{0\}\times N_1)$. Since $\overline{K_2\setminus K_1}\subseteq\Phi_1^{-1}([0,R_1]\times N_1)$, this implies that $W$ deformation retracts onto $\Phi_1^{-1}(\{0\}\times N_1)$. In a similar fashion, one sees that $W$ deformation retracts onto $\Phi_2^{-1}(\{R_2\}\times N_2)$, from which it follows that $W$ is a $h$-cobordism.
\end{proof}}

\begin{table}
\centering
\caption{Possible asymptotic geometries of gravitational instantons.}
\begin{tabular}{lr}
\toprule
&Boundary at infinity\\
\midrule
ALE\tablefootnote{Here, $\Gamma$ is any finite subgroup of $\mathrm{SO}(4)$ which acts freely on $S^3$.}&$S^3/\Gamma$\\
ALF-$A_{-1}$\tablefootnote{ALF-$A_{-1}$ is also called AF.}&$S^2\times S^1$\\
ALF-$A_k$,   $k\neq-1$&$L(|k+1|,-\operatorname{sgn}(k+1))$ \\
ALF-$D_2$\tablefootnote{Here, the action of $\mathbb{Z}_2$ on $S^2\times S^1$ is given by $(x,y,z,\theta)\mapsto(-x,-y,-z,-\theta)$.}&$(S^2\times S^1)/\mathbb{Z}_2$\\
ALF-$D_k$, $k\neq2$&$S^3/D_{4|k-2|}$\\
ALG&$T^2$-bundle over $S^1$\\
ALH-splitting&$\{\pm1\}\times T^3$\\
ALH-non-splitting&$T^3$\\
\bottomrule
\end{tabular}
\label{tab:asymptotic_geometries}
\end{table}
\begin{dfn}
A \emph{toric gravitational instanton} is a simply connected gravitational instanton $(M,g)$ together with an effective action of the torus $T^2=\mathrm{U}(1)\times\mathrm{U}(1)$ by isometries. We assume that the fixed point set\footnote{That is, the set of points in $M$ whose isotropy group is the entire group $T^2$.} is finite and non-empty, and that there are no points whose isotropy group is non-trivial and finite.
\end{dfn}
\begin{rem}
The requirement that there are no points with non-trivial and finite isotropy group is equivalent to requiring $M$ to be \emph{locally standard}, see \cite{ypkrsoujnujrtxzxhykz}.
\end{rem}
\subsection{Rod Structure}\label{sec:rod_structure}
\begin{lem}
Let $(M,g)$ be a toric gravitational instanton. Then the orbit space $\widehat{M}=M/T^2$ is a $2$-dimensional (smooth) manifold with corners (see \cite[p.~415]{MR2954043}). Furthermore, $\widehat{M}$ is homeomorphic to the closed upper half plane $\{z+i\rho\mid z\in\mathbb{R},\rho\geq0\}$, and under this homeomorphism, the set where $\rho>0$ corresponds to orbits whose points have trivial isotropy group. There exist points $z=z_1,\dots,z_n$ on the axis $\rho=0$, corresponding to the fixed points, dividing this axis into segments $z_i<z<z_{i+1}$ (where we take $z_0=-\infty$ and $z_{n+1}=\infty$). For each such segment, the points in the corresponding orbits all have the same isotropy group. This isotropy group is a circle subgroup of the torus, given as \begin{equation}T^2(v_i^1,v_i^2):=\{(e^{iv_i^1\theta},e^{iv_i^2\theta})\mid\theta\in\mathbb{R}\}\end{equation} for some pair $v_i=(v_i^1,v_i^2)$. {The vectors $(v_0,\dots,v_n)$ can be chosen in such a way that each vector is a pair of coprime integers, and such that \eqref{eq:determinant_normalization} holds for all $i$.}\label{lem:rod_structure_existence}
\end{lem}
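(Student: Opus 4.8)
The plan is to combine the structure theory of locally standard $T^2$-actions on $4$-manifolds with the asymptotic geometry recorded in Table~\ref{tab:asymptotic_geometries} and a local linearization at the fixed points.

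\emph{Step 1: the orbit space is a $2$-manifold with corners.} By the remark following the definition of a toric gravitational instanton (and \cite{ypkrsoujnujrtxzxhykz}), the action is locally standard, so near each orbit it is equivariantly modeled on a standard linear $T^k$-action on $\mathbb C^{k}$ times a trivial factor, $k\in\{0,1,2\}$. Hence $\widehat M$ is a smooth $2$-manifold with corners in the sense of \cite[p.~415]{MR2954043}, and the orbit-type stratification of $M$ descends: orbits whose isotropy has dimension $0$, $1$, $2$ map respectively onto the interior, the edges, and the corners of $\widehat M$. The interior therefore consists of free orbits. Since the fixed-point set is finite by hypothesis, there are finitely many corners, which we label $z_1<\dots<z_n$ once the boundary has been identified with a line; the edges are then the rods $(z_i,z_{i+1})$, with $z_0=-\infty$ and $z_{n+1}=\infty$. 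On each edge the slice representation is $\mathbb C$ with a circle acting by rotation, so the isotropy is a circle subgroup, and being locally constant on the connected edge it is in fact constant; thus each rod carries a single subgroup $T^2(v_i^1,v_i^2)$, with $v_i$ primitive because that subgroup is a $1$-dimensional closed subgroup.

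\emph{Step 2: $\widehat M$ is the closed half-plane.} The space $\widehat M$ is non-compact (as $M$ is, and $T^2$ is compact) and simply connected (a quotient of a simply connected space by a connected compact group is simply connected). To identify it, I would use the equivariant version of the asymptotic normal form: a neighborhood of infinity in $M$ is equivariantly $[0,\infty)\times N$ with $N$ one of the links of Table~\ref{tab:asymptotic_geometries} carrying a $T^2$-action, so the corresponding end of $\widehat M$ is $[0,\infty)\times(N/T^2)$. Only the ALE and ALF cases are relevant here --- in the ALG and ALH cases $N/T^2$ is a circle, which is incompatible with simple connectivity of $\widehat M$ once a corner is present --- and in those cases one checks directly that $N/T^2\cong[0,1]$. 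The end of $\widehat M$ is then a half-strip, and gluing it to the complementary compact simply connected $2$-manifold with corners (necessarily a polygonal disk) yields $\widehat M\cong\{z+i\rho:\rho\geq0\}$, with the edges lying in $\{\rho=0\}$, the corners at $z_1,\dots,z_n$, and the two semi-infinite rods carrying $v_0$ and $v_n$.

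\emph{Step 3: integrality and the normalization \eqref{eq:determinant_normalization}.} Linearizing the action near a fixed point $z_i$ gives a linear $T^2$-action on $T_{z_i}M\cong\mathbb R^4$ with an isolated fixed point; after complex-diagonalizing it is $(w_1,w_2)\mapsto(e^{i\langle a,\theta\rangle}w_1,\,e^{i\langle b,\theta\rangle}w_2)$ for $a,b\in\mathbb Z^2$, and effectiveness of the action forces $\{a,b\}$ to be a $\mathbb Z$-basis of the weight lattice. The two edges meeting at $z_i$ are the images of $\{w_2=0\}$ and $\{w_1=0\}$, whose stabilizers are the circles generated by the primitive vectors $v_{i-1}$ and $v_i$; since $\{a,b\}$ is a basis, $\det(v_{i-1},v_i)=\pm1$. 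Each $v_i$ is defined only up to sign, so: fix the sign of $v_0$ arbitrarily, and for $i=1,\dots,n$ in turn choose the sign of $v_i$ making $\det(v_{i-1},v_i)=+1$ (possible since the value is $\pm1$). This gives \eqref{eq:determinant_normalization} for every $i$.

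\emph{Main obstacle.} The genuinely delicate step is Step 2: non-compactness and simple connectivity by themselves do not determine $\widehat M$ (the strip $\mathbb R\times[0,1]$ is contractible), so one really needs the \emph{equivariant} refinement of the asymptotic normal form --- that $M$ has a single end on which the action is the standard one on a collar of a link $N$ --- together with the case-by-case check that $N/T^2$ is a closed interval. Steps 1 and 3 are routine applications of standard transformation-group theory.
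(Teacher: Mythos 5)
Your Steps 1 and 3 track the paper's argument closely: the paper cites the (non-compact analog of the) Orlik--Raymond structure theorem of \cite{vhuuwalemetjkjihjowo} for the manifold-with-corners statement, the constancy of the isotropy group along each edge, and the relation $\det(v_{i-1},v_i)=\pm1$ for adjacent edges, and then performs exactly your sign-flipping to achieve \eqref{eq:determinant_normalization}. The problem is Step 2, which you yourself identify as the delicate point. You propose to resolve it by invoking an ``equivariant version of the asymptotic normal form'' --- that a neighborhood of infinity in $M$ is \emph{equivariantly} a collar $[0,\infty)\times N$ on which $T^2$ acts in a standard way, so that the end of $\widehat M$ is $[0,\infty)\times(N/T^2)$. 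No such equivariant statement is available off the shelf: the asymptotic classification behind Table \ref{tab:asymptotic_geometries} produces a diffeomorphism of the end with a model, not a $T^2$-equivariant one, and your subsidiary claims (that $N/T^2$ is a circle in the ALG/ALH cases and a closed interval in the ALE/ALF cases) presuppose precisely the knowledge of how $T^2$ acts on $N$ that you have not established. In the paper's logical order this equivariant control of the end is only obtained \emph{after} the present lemma (Lemma \ref{lem:boundary_at_infinity_toric_instanton} takes the half-plane picture as input), so as written your Step 2 is an unproved assertion that is essentially as strong as what one would want to conclude.

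The missing ingredient is much weaker than an equivariant normal form: what rules out the strip $\mathbb R\times[0,1]$ and its relatives is simply that $\widehat M$ has \emph{one end}, and this can be obtained non-equivariantly. Every asymptotic geometry in Table \ref{tab:asymptotic_geometries} except ALH-splitting has connected boundary at infinity, and ALH-splitting is excluded because such an $M$ would be homeomorphic to $\mathbb R\times T^3$, contradicting the simple connectivity of $M$; hence $M$, and therefore its quotient $\widehat M$ by the compact group $T^2$, has exactly one end. A simply connected, non-compact surface with non-empty boundary and one end is the closed half-plane by the classification of simply connected surfaces with boundary. This is the route the paper takes. If you replace your Step 2 by this end-counting argument, your proof closes; as it stands, Step 2 rests on an unjustified (and unnecessary) equivariant hypothesis about the geometry at infinity.
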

\begin{proof}
Since $M$ has no points with a non-trivial finite isotropy group, the fact that $\widehat{M}$ is a $2$-manifold with corners follows from the non-compact analog of \cite[Theorem~1]{vhuuwalemetjkjihjowo}. From the discussion following the theorem, it is also clear that the interior of $\widehat{M}$ corresponds to points with trivial isotropy and that the corners correspond to fixed points. It can also be seen from this discussion that for a boundary segment, the isotropy group is constant on this segment, that it is of the form $T^2(v)$ for some pair $v$ of coprime integers, and that if $T^2(v)$ and $T^2(w)$ are the isotropy groups corresponding to two segments meeting in a corner, then $\det\mqty(v&w)=\pm1$. {It remains to show that $\widehat{M}$ is homeomorphic to the closed upper half plane, for once this is done, the sequence of vectors representing the isotropy groups associated with the boundary can be chosen to satisfy \eqref{eq:determinant_normalization}, using the fact that $T^2(v)=T^2(-v)$ and successively flipping the signs of the vectors appropriately.}

To prove that $\widehat{M}$ is homeomorphic to the closed upper half plane, first note that the orbit space $\widehat{M}$ is the quotient of a non-compact manifold by a compact Lie group and is therefore non-compact. We claim that $\widehat{M}$ is simply connected. To see this, take any loop $\widehat{\gamma}$ in $\widehat{M}$. Since $M$ has a fixed point, we can assume that the basepoint of $\widehat{\gamma}$ is the image of such a point. We can lift $\widehat{\gamma}$ to a curve $\gamma$ in $M$, and because the fiber of the basepoint of $\widehat{\gamma}$ consists of a single point, $\gamma$ must also be a loop. Using the assumption that $M$ is simply connected, we thus see that $\gamma$, and therefore $\widehat{\gamma}$ is null-homotopic.

Of the asymptotic geometries in Table \ref{tab:asymptotic_geometries}, all except ALH-splitting have only one end. Furthermore, $M$ cannot be ALH-splitting since by \cite[Theorem~3.7]{yZ29MGzxmmAh}, $M$ would then be homeomorphic to $\mathbb{R}\times T^3$, which contradicts the assumption that $M$ is simply connected. Thus, $M$ has only one end, and one can see from this that $\widehat{M}$ has only one end. By the classification of simply connected surfaces with boundary, $\widehat{M}$ is homeomorphic to the closed upper half plane.
\end{proof}
\begin{dfn}
The sequence $(v_0,\dots,v_n)$, where the vectors $v_i=(v_i^1,v_i^2)$ are as in Lemma \ref{lem:rod_structure_existence}, is called the \emph{rod structure} of $M$.
\end{dfn}
\begin{rem}
Some papers (e.g.,\ \cite{uguvpxtckkesrlwjgulj}) define the rod structure also to include the lengths $z_i-z_{i-1}$ of the boundary segments. Although these lengths are not well-defined in the context of Lemma \ref{lem:rod_structure_existence}, they are well-defined if one requires the homeomorphism from $\widehat{M}$ to the closed upper half plane to be given by a canonical set of coordinates, the \emph{Weyl--Papapetrou coordinates}.
\end{rem}
{\begin{lem}
Let $(M,g)$ be a toric gravitational instanton with rod structure $(v_0,\dots,v_n)$. Then the boundary at infinity of $M$ is the lens space\footnote{Here, we adopt the convention that $L(0,1)=S^2\times S^1$.} $L(p,q)$, where $p:=|\det\mqty(v_0&v_n)\/|$ and $q:=\operatorname{sgn}(\det\mqty(v_0&v_n))\cdot\det\mqty(v_1&v_n)$.
\label{lem:boundary_at_infinity_toric_instanton}
\end{lem}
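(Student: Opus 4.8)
The plan is to choose the compact exhausting set in a $T^2$-invariant way, so that the boundary at infinity can be read off directly from the rod structure. Using Lemma~\ref{lem:rod_structure_existence}, identify $\widehat M=M/T^2$ with the closed upper half plane, with the turning points at $z_1<\dots<z_n$ on the axis, and let $\pi\colon M\to\widehat M$ be the orbit map. Fix $R>\max_i\lvert z_i\rvert$ and set $\widehat K_R=\{z+i\rho\in\widehat M\mid z^2+\rho^2\le R^2\}$ and $K_R=\pi^{-1}(\widehat K_R)$. The region $\widehat M\setminus\operatorname{int}\widehat K_R$ is homeomorphic to $[R,\infty)\times[0,\pi]$ via $(r,\phi)\mapsto re^{i\phi}$, and along it the isotropy data is independent of $r$: the orbits over $\phi\in(0,\pi)$ are free, while over $\phi=0$ the isotropy group is $T^2(v_n)$ (that half-axis lies on the rod $z>z_n$) and over $\phi=\pi$ it is $T^2(v_0)$. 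By local triviality of the orbit map over a region carrying a product stratification---the point at which one invokes the structure theory of $T^2$-manifolds with corners already used in Lemma~\ref{lem:rod_structure_existence}---the preimage $\pi^{-1}(\widehat M\setminus\operatorname{int}\widehat K_R)$ is equivariantly a product $[R,\infty)\times N$, where $N=\pi^{-1}(\gamma_R)$ and $\gamma_R=\{Re^{i\phi}\mid 0\le\phi\le\pi\}$ is the bounding semicircle. Thus $N$ is a boundary at infinity of $M$ (and by Lemma~\ref{lem:boundary_at_infinity_well_defined} any other is $h$-cobordant to it), and it remains to identify it.

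Next I would cut $\gamma_R$ at its midpoint $\phi=\pi/2$ into the closed sub-arcs $\gamma_-=\{Re^{i\phi}\mid 0\le\phi\le\pi/2\}$ and $\gamma_+=\{Re^{i\phi}\mid\pi/2\le\phi\le\pi\}$. Over $\gamma_-$ the orbit space is a closed interval with all orbits free except that $T^2(v_n)$ collapses at the endpoint $\phi=0$; since $R$ exceeds every turning point there are no exceptional orbits here, so the slice theorem applies (local standardness of the action forces the slice representation to be the standard rotation on $\mathbb R^2$), and choosing $w_n$ with $\det\mqty(v_n&w_n)=1$ and splitting the acting torus as $S^1_{v_n}\times S^1_{w_n}$ one obtains $\pi^{-1}(\gamma_-)\cong D^2\times S^1$, a solid torus whose meridian---the boundary of a compressing disk---is the $T^2(v_n)$-orbit, of homology class $v_n$ in $H_1(T^2)\cong\mathbb Z^2$. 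In the same way $\pi^{-1}(\gamma_+)$ is a solid torus with meridian of class $v_0$. These two solid tori are glued along the free orbit $T=\pi^{-1}(Re^{i\pi/2})$, which I identify with the abstract torus $T^2$ via the free action; this identification is canonical up to translation, hence canonical on $H_1$.

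It then remains to recognize the result of gluing two solid tori along their boundary $2$-tori with compressing curves of classes $v_0$ and $v_n$ in the common $H_1(T^2)\cong\mathbb Z^2$. If $v_0=\pm v_n$, i.e.\ $\det\mqty(v_0&v_n)=0$, the meridians agree and $N\cong S^2\times S^1=L(0,1)$. Otherwise this is a genus-one Heegaard splitting of a lens space, whose first homology $\mathbb Z^2/\langle v_0,v_n\rangle$ has order $p=\lvert\det\mqty(v_0&v_n)\rvert$. To pin down $q$ I would use the normalization $\det\mqty(v_0&v_1)=1$, which makes $(v_0,v_1)$ a $\mathbb Z$-basis: present the $v_0$-solid torus as $D^2\times S^1$ with meridian $\mu=v_0$ and longitude $\lambda=v_1$, write $v_n=av_0+bv_1$ so that $b=\det\mqty(v_0&v_n)$ and $a=-\det\mqty(v_1&v_n)$, and apply the standard dictionary by which attaching a solid torus along the curve $a\mu+b\lambda$ produces the lens space with parameters $(\lvert b\rvert,\operatorname{sgn}(b)a)$. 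After matching the orientation conventions in that dictionary (using $L(p,q)\cong L(p,-q)$ if needed) this is exactly $L(p,q)$ with $p$ and $q$ as stated. The value is moreover independent of the choice of $v_1$: replacing $v_1$ by $v_1+kv_0$ changes $\det\mqty(v_1&v_n)$ by $k\det\mqty(v_0&v_n)=kp$, hence leaves $q$ unchanged modulo $p$.

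The step demanding the most care is the first one---verifying that $\pi^{-1}(\widehat M\setminus\operatorname{int}\widehat K_R)$ is genuinely an equivariant product, so that $N$ really is a boundary at infinity in the sense of the definition (this is presumably why the proof of Lemma~\ref{lem:boundary_at_infinity_well_defined} was written out in detail). By contrast, the decomposition into two solid tori is a direct application of the slice theorem, and the concluding identification is elementary $\mathbb Z$-linear algebra once the orientation conventions in the lens-space dictionary are fixed. As a byproduct, since $(S^2\times S^1)/\mathbb Z_2$, the manifolds $S^3/D_{4k}$, the $T^2$-bundles over $S^1$, and $T^3$ are not lens spaces, this computation also shows that the asymptotic geometry of a toric gravitational instanton must be ALE with cyclic $\Gamma$ or ALF-$A_k$.
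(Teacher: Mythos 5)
Your proof is correct, but the key identification step is carried out by a genuinely different route than the paper's. The paper likewise restricts to the preimage of the complement of a large half-disk in the orbit space, but then normalizes $v_0=(0,1)$, $v_1=(-1,0)$ (so $v_n=\pm(p,-q)$), writes down an \emph{explicit} $T^2$-action on the candidate model $(R,\infty)\times L(p,q)$ (respectively $(R,\infty)\times S^2\times S^1$ when $p=0$), checks that its weighted orbit space matches, and invokes the equivariant classification theorem to conclude that the end of $M$ is equivariantly diffeomorphic to that model. You use the same classification theorem only to obtain the equivariant product structure $[R,\infty)\times N$ over the end --- the step you rightly flag as the delicate one; it is precisely the uniqueness of locally standard $T^2$-manifolds over a given weighted orbit space, so your appeal to it is legitimate --- and then identify $N$ intrinsically as a genus-one Heegaard splitting with meridian classes $v_n$ and $v_0$. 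Your route avoids having to guess and verify an explicit model action, at the cost of tracking lens-space conventions in the Heegaard dictionary; since $L(p,q)$ and $L(p,-q)$ are homeomorphic as unoriented manifolds and the boundary at infinity is only defined up to homeomorphism, your handling of the sign ambiguity suffices for the lemma as stated (be aware, though, that later in the paper the \emph{orientation} of $L(p,q)$ matters for the eta-invariant, and there the explicit model is what pins it down). Your remark that replacing $v_1$ by $v_1+kv_0$ changes $\det\mqty(v_1&v_n)$ only by a multiple of $p$ is a worthwhile consistency check that the paper leaves implicit, and your closing observation correctly anticipates Theorem \ref{thm:asymptotic_geometries_toric_instanton}.
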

\begin{proof}
Fix an identification of the orbit space $\widehat{M}$ with the closed upper half plane $\{z+i\rho\mid \rho\geq 0\}$ as in Lemma \ref{lem:rod_structure_existence}; such an identification can be chosen to be a diffeomorphism away from the corners. With $v_0,\dots,v_n$ and $z_1,\dots,z_n$ as in Lemma \ref{lem:rod_structure_existence}, set $R=\max(|z_1|,\dots,|z_n|)$, and consider the subset $\widehat{A}=\{z+i\rho\mid\rho\geq0,z^2+\rho^2>R^2\}\subseteq\widehat{M}$.

Without loss of generality, we may assume that $v_0=(0,1)$ and $v_1=(-1,0)$; otherwise, we can modify the $T^2$-action on $M$ by composing it with an appropriate isomorphism $T^2\to T^2$. Then $v_n=\pm(p,-q)$, so that the isotropy groups associated with the boundary segments of $\widehat{A}$ are $T^2(0,1)$ and $T^2(p,-q)$.

In the case where $v_0=v_n$, i.e.,\ $p=0$, the set $\widehat{A}$ can be realized as the orbit space of $(R,\infty)\times S^2\times S^1$. Here, the latter is endowed with the $T^2$-action given by \begin{equation}(e^{i\theta_1},e^{i\theta_2})\cdot(r,t,z_1,z_2)=(r,t,e^{i(\theta_2}z_1,e^{i\theta_1}z_2),\end{equation} where we are identifying $S^2$ as a subset of $\mathbb{R}\times\mathbb{C}$. By \cite[Theorem~1.1]{ypkrsoujnujrtxzxhykz}, this means that there exists a diffeomorphism $\overline{M\setminus K}\to[R+1,\infty)\times S^2\times S^1$ which maps $\partial K$ onto $\{R+1\}\times S^2\times S^1$, where $K$ is the preimage of the compact set $\widehat{K}=\{z+i\rho\mid\rho\geq0,z^2+\rho^2\leq(R+1)^2\}$ under the quotient map $M\to\widehat{M}$. Thus, $S^2\times S^1$ is the boundary at infinity of $M$.

Now consider the case where $v_0\neq v_n$, i.e.,\ $p\neq 0$. Define a $T^2$-action on $(R,\infty)\times L(p,q)$ by
\begin{equation}
(e^{i\theta_1},e^{i\theta_2})\cdot(r,z_1,z_2)=(r,e^{i\theta_1/p}z_1,e^{-i(\theta_2+q\theta_1/p)}z_2).
\end{equation}
Then points of the form $(r,z_1,0)$ have isotropy group $T^2(0,1)$, points of the form $(r,0,z_2)$ have isotropy group $T^2(p,-q)$, and all other points have trivial isotropy group, and in particular, the orbit space of $(R,\infty)\times L(p,q)$ can be realized as $\widehat{A}$. As in the previous case, the conclusion now follows directly from \cite[Theorem~1.1]{ypkrsoujnujrtxzxhykz}.
\end{proof}}
\begin{thm}
Let $(M,g)$ be a toric gravitational instanton. Then $(M,g)$ is either ALE with a cyclic group $\Gamma$, or ALF-$A_k$.\label{thm:asymptotic_geometries_toric_instanton}\end{thm}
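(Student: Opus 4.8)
The plan is to reduce the statement to an inspection of Table~\ref{tab:asymptotic_geometries}. By the discussion following Definition~\ref{dfn:gravitational_instanton}, a toric gravitational instanton $(M,g)$ is asymptotic to one of the eight geometries listed there, so I must rule out ALG, ALH (both the splitting and the non-splitting case), ALF-$D_k$, and ALE with non-cyclic $\Gamma$. The engine of the argument is Lemma~\ref{lem:boundary_at_infinity_toric_instanton}: using the $T^2$-action, it identifies one boundary at infinity of $M$ with a lens space $L(p,q)$, whose fundamental group is cyclic --- finite cyclic of order $p$, or $\mathbb{Z}$ when $p=0$. On the other hand, by Lemma~\ref{lem:boundary_at_infinity_well_defined} the model boundary $N$ appearing in the row of Table~\ref{tab:asymptotic_geometries} that corresponds to the actual asymptotic type of $M$ is $h$-cobordant to $L(p,q)$; since the inclusion of each boundary component of an $h$-cobordism is a homotopy equivalence, $\pi_1(N)\cong\pi_1(L(p,q))$ is cyclic. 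So everything comes down to deciding which rows of the table have model boundary with cyclic fundamental group.

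The ALH and ALG rows go out immediately. The ALH-splitting model $\{\pm1\}\times T^3$ is disconnected, hence cannot be $h$-cobordant to the connected manifold $L(p,q)$ (this case was in any event already excluded in the proof of Lemma~\ref{lem:rod_structure_existence}); the ALH-non-splitting model $T^3$ has $\pi_1\cong\mathbb{Z}^3$; and the ALG model, being a $T^2$-bundle over $S^1$, has fundamental group containing the fiber subgroup $\mathbb{Z}^2$. None of these is cyclic. For the ALE row the model boundary $S^3/\Gamma$ has $\pi_1\cong\Gamma$, so cyclicity of $\pi_1$ says precisely that $\Gamma$ is cyclic --- the permitted ALE case. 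The ALF-$A_k$ rows are of course permitted: their model boundaries $S^2\times S^1$ and $L(|k+1|,-\operatorname{sgn}(k+1))$ have cyclic fundamental group, consistent with $p=|k+1|$.

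What remains is ALF-$D_k$. For $k=2$ the model is $(S^2\times S^1)/\mathbb{Z}_2$; the nontrivial element of $\mathbb{Z}_2$ acts antipodally on $S^2$ (hence freely) and by $\theta\mapsto-\theta$ on $S^1$, which yields a group extension $1\to\mathbb{Z}\to\pi_1\to\mathbb{Z}_2\to1$ in which $\mathbb{Z}_2$ acts on $\mathbb{Z}=\pi_1(S^2\times S^1)$ by inversion, so $\pi_1$ is the infinite dihedral group, which is non-abelian. For $k\neq2$ the model is $S^3/D_{4|k-2|}$ and $\pi_1$ is the binary dihedral group $D_{4|k-2|}$, which is non-abelian as soon as $|k-2|\geq2$. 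So every ALF-$D_k$ with $|k-2|\neq1$ is excluded, and putting this together with the previous paragraph, the only remaining possibilities are ALE with cyclic $\Gamma$, ALF-$A_k$, and --- pending one further check --- the borderline ALF-$D_k$ with $|k-2|=1$.

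That borderline case is the step I expect to be the main obstacle. For $|k-2|=1$ the binary dihedral group $D_4$ is cyclic of order $4$, so the model boundary $S^3/D_4\cong L(4,\pm1)$ is a genuine lens space and the fundamental-group test alone is inconclusive. To close it I would either invoke the precise parametrisation of the ALF-$D$ family used in the classification cited after Definition~\ref{dfn:gravitational_instanton} (in which this value of $k$ is degenerate and does not arise), or else argue directly that a $D$-type end cannot carry an effective $T^2$-action even when its boundary $3$-manifold abstractly does: the $\mathbb{Z}_2$-identification built into the $D$-type asymptotic model does not commute with a second circle action, so such an end admits only a $\mathrm{U}(1)$ of isometries, contradicting the toric hypothesis. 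Once this is settled, the only surviving cases are ALE with $\Gamma$ cyclic and ALF-$A_k$, which is the assertion.
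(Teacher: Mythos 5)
Your strategy coincides with the paper's: Lemma~\ref{lem:boundary_at_infinity_toric_instanton} identifies the boundary at infinity with a lens space $L(p,q)$, whose fundamental group is cyclic, and Lemma~\ref{lem:boundary_at_infinity_well_defined} transfers this to the model boundary in Table~\ref{tab:asymptotic_geometries}; one then discards every row whose model boundary does not have cyclic $\pi_1$. Your handling of ALH-splitting (disconnected), ALH-non-splitting ($\mathbb{Z}^3$), ALG ($\mathbb{Z}^2$ injects into $\pi_1$ of the total space because $\pi_2(S^1)=0$), ALE with non-cyclic $\Gamma$, and ALF-$D_2$ (where your infinite dihedral group is exactly the matrix group $G$ the paper writes down, shown there to be non-cyclic because it is infinite and contains an element of order two) all match the paper and are correct.

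The genuine issue is the case you yourself flag, ALF-$D_k$ with $|k-2|=1$, and you are right that it is a real soft spot: the binary dihedral group of order $4$ is cyclic ($\cong\mathbb{Z}_4$), so $S^3/D_4$ is a lens space and the $\pi_1$ test is silent. (The paper's proof does not single this case out either; it dismisses all ALF-$D_k$, $k\neq2$, by asserting that $D_{4|k-2|}$ is non-cyclic, which read literally fails for $k=1,3$.) The problem with your write-up is that you do not close the case, and neither of your two proposed repairs can be waved through. The first --- that $|k-2|=1$ ``does not arise'' in the classification --- is not something you may assume: ALF-$D_1$ and ALF-$D_3$ ends are realized (e.g.\ by the hyper-K\"ahler $D_1$ and $D_3$ ALF instantons), so this asymptotic type exists and must be excluded for \emph{toric} instantons by an actual argument. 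The second --- that the $\mathbb{Z}_2$-identification in a $D$-type end is incompatible with a second isometric circle --- is the right kind of claim but is only asserted; note that $\mathbb{R}^3/\mathbb{Z}_2$ itself does admit isometric circle actions, so the incompatibility is not immediate and would have to be extracted from the structure of the asymptotic circle fibration, or from the explicit equivariant model of the end constructed in the proof of Lemma~\ref{lem:boundary_at_infinity_toric_instanton} (which is where the information beyond the homeomorphism type of the boundary actually lives). Until one of these routes is carried out, the cases $k=1$ and $k=3$ remain open in your proof.
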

\begin{proof}
{By Lemma \ref{lem:boundary_at_infinity_toric_instanton},} we can rule out any asymptotic geometry for which the boundary at infinity is not homotopy equivalent to $S^2\times S^1$ or to a lens space. Looking at Table \ref{tab:asymptotic_geometries}, this immediately rules out ALH-splitting since it is not even path-connected in that case. Using the fact that $\pi_1(L(p,q))$ is cyclic, we can also rule out ALH-non-splitting, along with any boundary of the form $S^3/\Gamma$ where $\Gamma$ is not cyclic, since $\pi_1(S^3/\Gamma)=\Gamma$. Thus, $M$ is neither ALF-$D_k$ with $k\neq 2$, ALE with non-cyclic group $\Gamma$, nor ALH-non-splitting.

It remains to rule out the cases ALF-$D_2$ and ALG. For ALF-$D_2$, note that $(S^2\times S^1)/\mathbb{Z}_2$ is homeomorphic to the quotient $(S^2\times\mathbb{R})/G$, where $G$ is the subgroup \begin{equation}\qty{\mqty(1&a\\0&b)\,\middle\vert\,b=\pm1,a\in\mathbb{Z}}\subseteq \operatorname{GL}(2,\mathbb{Z}),\end{equation} acting on $S^2\times\mathbb{R}$ by \begin{equation}\qty(\mqty(1&a\\0&b),(x_1,x_2,x_3,y))\mapsto(bx_1,bx_2,bx_3,by+a).\end{equation} The action of $G$ on $S^2\times\mathbb{R}$ is easily seen to be a covering space action (see \cite[p.~72]{MR1867354}), and since $S^2\times\mathbb{R}$ is simply connected, $G$ is the fundamental group of $(S^2\times S^1)/\mathbb{Z}_2$. But $G$ is not cyclic since it is infinite and contains the element $\mqty(1&0\\0&-1)$, whose order is $2$.

Finally, to rule out ALG, we will investigate the fundamental group of a fiber bundle $E\to S^1$ with fiber $T^2$. By \cite[Theorem~4.41, Proposition~4.48]{MR1867354}, there is an exact sequence of homotopy groups
\begin{equation}\begin{tikzcd}
\cdots \arrow[r] & \pi_2(S^1) \arrow[r] & \pi_1(T^2) \arrow[r] & \pi_1(E) \arrow[r] & \cdots.
\end{tikzcd}\end{equation} But the homotopy group $\pi_2(S^1)$ is trivial, since any continuous map $S^2\to S^1$ lifts to the universal cover $\mathbb{R}$, which is contractible. Since also $\pi_1(T^2)=\mathbb{Z}^2$, this means that there is an injective group homomorphism $\mathbb{Z}^2\to\pi_1(E)$, which cannot happen if $\pi_1(E)$ is cyclic.
\end{proof}
\begin{rem}
Consider $M=\mathbb{R}^2\times S^1\times S^1$, with a $T^2$-action where the first circle factor acts by rotating $\mathbb{R}^2$ around the origin, and the second circle factor acts by rotating one of the circle factors. Then the quotient space $\widehat{M}=M/T^2$ can be identified with $S^1\times[0,\infty)\cong\mathbb{R}^2\setminus B_1(0)$. Here, the inner circle represents points fixed by the subgroup corresponding to the first factor, while the exterior points correspond to points with a trivial isotropy group. This gives an analog of a rod structure for an ALH space, where there is one rod joined to itself at both ends and no turning points.
\end{rem}
\section{Examples}\label{sec:examples}
In this section, we consider some known examples of toric gravitational instantons. Table \ref{tab:examples_toric_instantons} lists their topology and topological invariants, along with their isometry groups, asymptotic geometries, and whether they are hyper-Kähler (see Definition \ref{dfn:hyper_kaehler}) or not. Their rod structures are shown in Figure \ref{fig:examples_rod_structures}. Further details about these instantons can be found in \cite{MR2659115,MR2831866,MR535152}.
\begin{figure}
\begin{minipage}[c][\textheight][c]{\textwidth}
\centering
\captionof{table}{Some known toric gravitational instantons.}
\begin{tabular}{llll}
\toprule
&Topology&Euler characteristic&Signature\\\midrule
Euclidean space and Taub--NUT&$\mathbb{R}^4$&1&0\\
Kerr and Schwarzschild&$\mathbb{R}^2\times S^2$&2&0\\
Taub-bolt&$\mathbb{C}P^2\setminus\{*\}$&2&1\\
Eguchi--Hanson&$T^*S^2$&2&1\\
Chen--Teo&$\mathbb{C}P^2\setminus S^1$&3&1\\
\bottomrule\\
\toprule
&Isometry group&Asymptotic geometry&Hyper-Kähler?\\\midrule
Euclidean space&$\mathbb{R}^4\rtimes\operatorname{O}(4)$&ALE with trivial group&Yes\\
Schwarzschild&$\operatorname{O}(2)\times\operatorname{O}(3)$&ALF-$A_{-1}$&No\\
Kerr&$\operatorname{O}(2)\times\operatorname{O}(2)$&ALF-$A_{-1}$&No\\
Taub--NUT&$(\operatorname{U}(1)\times\operatorname{SU}(2))/\mathbb{Z}_2$&ALF-$A_0$&Yes\\
Taub-bolt&$(\operatorname{U}(1)\times\operatorname{SU}(2))/\mathbb{Z}_2$&ALF-$A_0$&No\\
Eguchi--Hanson&$(\operatorname{U}(1)\times\operatorname{SU}(2))/\mathbb{Z}_2$&ALE with group $\mathbb{Z}_2$&Yes\\
Chen--Teo&$T^2$&ALF-$A_{-1}$&No\\
\bottomrule
\end{tabular}
\label{tab:examples_toric_instantons}
\vfill
\includegraphics[width=\textwidth]{R4_taub_nut.tikz}
\includegraphics[width=\textwidth]{schwarzschild_kerr.tikz}
\includegraphics[width=\textwidth]{taub_bolt.tikz}
\includegraphics[width=\textwidth]{eguchi_hanson.tikz}
\includegraphics[width=\textwidth]{chen_teo.tikz}
\captionof{figure}{Rod structures of (a) $\mathbb{R}^4$ and Taub--NUT; (b) Schwarzschild and Kerr; (c) Taub-bolt; (d) Eguchi--Hanson; (e) Chen--Teo.}
\label{fig:examples_rod_structures}
\end{minipage}     
\end{figure}
{\subsection{Euclidean Space}\label{sec:euclidean_space}
An elementary example is Euclidean space $\mathbb{R}^4$ with the flat metric. Identifying $\mathbb{R}^4$ with $\mathbb{C}^2$ and introducing coordinates $(r,\theta,\phi,\psi)$ by
\begin{equation}
(z_1,z_2)=\left(r\sin(\frac{\theta}{2})e^{i\phi},r\cos(\frac{\theta}{2})e^{i\psi}\right),
\end{equation}
where $r>0$ and $0<\theta<\pi$, the flat metric takes the form
\begin{equation}
g=\dd{r}^2+\frac{r^2}{4}\dd{\theta}^2+r^2\sin^2\left(\frac{\theta}{2}\right)\dd{\phi}^2+r^2\cos^2\left(\frac{\theta}{2}\right)\dd{\psi}^2.
\end{equation}
The Killing fields
\begin{equation}
\pdv{\psi}\qquad\text{and}\qquad\pdv{\phi}
\end{equation}
are $2\pi$-periodic, and thus generate an isometric $T^2$-action. Under this action, points with $\theta=0$ and $r>0$ have isotropy group $T^2(0,1)$, and points with $\theta=\pi$ and $r>0$ have isotropy group $T^2(1,0)$. The point where $r=0$ (i.e.,\ the origin) has isotropy group $T^2$, i.e.,\ is fixed under the whole action, and all other points have a trivial isotropy group.
\subsection{Riemannian Kerr and Schwarzschild Spaces}
Let $m>0$ and $a\in\mathbb{R}$, and introduce the shorthands $\Sigma=r^2-a^2\cos^2\theta$ and $\Delta=r^2-2mr-a^2$. The \emph{Riemannian Kerr metric},
\begin{equation}
g=\frac{\Sigma}{\Delta}\,dr^2+\Sigma\,d\theta^2+\frac{\Delta}{\Sigma}(d\tau-a\sin^2\theta\,d\phi)^2+\frac{\sin^2\theta}{\Sigma}((r^2-a^2)\,d\phi+a\,d\tau)^2,
\end{equation}
where $r>r_+:=m+\sqrt{m^2+a^2}$ and $0<\theta<\pi$, is a complete, Ricci-flat metric on $\mathbb{R}^2\times S^2$, provided the right identifications in the $(\tau,\phi)$-plane are made (see \cite[Section~3]{Nilsson_2024}).

The Killing fields
\begin{equation}
\frac{2mr_{+}}{\sqrt{m^2+a^2}}\pdv{\tau}-\frac{a}{\sqrt{m^2+a^2}}\pdv{\phi}\qquad\text{and}\qquad\pdv{\phi}
\end{equation}
are then $2\pi$-periodic, and correspond to rotation of the $\mathbb{R}^2$ factor around its origin, and rotation of the $S^2$ factor around its polar axis, respectively. Together, they generate an isometric $T^2$-action. Under this action, points in the set $(\mathbb{R}^2\setminus\{0\})\times\{N,S\}$ have isotropy group $T^2(0,1)$, where $N$ and $S$ are the north and south pole of $S^2$, respectively. Points in the set $\{0\}\times(S^2\setminus\{N,S\}$ have isotropy group $T^2(1,0)$, and the points $\{0\}\times\{N\}$ and $\{0\}\times\{S\}$ are fixed by all of $T^2$. All other points have a trivial isotropy group.

In the special case $a=0$, the metric is often referred to as the \emph{Riemannian Schwarzschild metric}, and in this case it reduces to
\begin{equation}
g=\left(1-\frac{2m}{r}\right)\dd{\tau}^2+\left(1-\frac{2m}{r}\right)^{-1}\dd{r}^2+r^2(\dd{\theta}^2+\sin^2\theta\dd{\phi}^2).
\end{equation}
\subsection{Riemannian Taub--NUT Space}
For $n>0$, the \emph{Riemannian Taub--NUT metric} is given by
\begin{equation}
g=\qty(1+\frac{2n}{r})(\dd{r^2}+r^2\dd{\theta^2}+r^2\sin^2\theta\dd{\phi^2})+\qty(1+\frac{2n}{r})^{-1}(\dd{\psi}+2n\cos\theta\dd{\phi})^2,
\end{equation}
where $r>0$ and $0<\theta<\pi$. Introducing new coordinates by
\begin{equation}
\begin{cases}
r&=\tilde{r}^2,\\
\psi&=2n(\tilde{\psi}+\tilde{\phi}),\\
\phi&=\tilde{\psi}-\tilde{\phi},
\end{cases}
\end{equation}
The metric takes the form
\begin{multline}
g=4(r+2n)\qty(\dd{\tilde{r}}^2+\frac{\tilde{r}^2}{4}\dd{\theta}^2+\tilde{r}^2\sin^2\qty(\frac{\theta}{2})\dd{\tilde{\phi}}^2+\tilde{r}^2\cos^2\qty(\frac{\theta}{2})\dd{\tilde{\psi}}^2)\\
-\frac{4(r+4n)}{r+2n}\qty(\tilde{r}^2\sin^2\qty(\frac{\theta}{2})\dd{\tilde{\phi}}+\tilde{r}^2\cos^2\qty(\frac{\theta}{2})\dd{\tilde{\psi}})^2.
\end{multline}
A comparison with the coordinate expressions in Section \ref{sec:euclidean_space} shows that $g$ can be made regular at the coordinate singularities $r=0$, $\theta=0$ and $\theta=\pi$ by giving $\psi$ and $\phi$ the periodicities $8n\pi$ and $2\pi$ respectively, independently. The metric $g$ is thus a complete Ricci-flat metric on $\mathbb{R}^4$. The $2\pi$-periodic Killing fields
\begin{equation}
\pdv{\tilde{\psi}}=2n\pdv{\psi}+\pdv{\phi}\qquad\text{and}\qquad\pdv{\tilde{\phi}}=2n\pdv{\psi}-\pdv{\phi}
\end{equation}
generate an isometric $T^2$-action, and when described in terms of the coordinates $r$ and $\theta$, the sets of points with specific isotropy groups are the same as those for Euclidean space with the $T^2$-action defined in Section \ref{sec:euclidean_space}.
\subsection{Taub-Bolt Space}
Let $n>0$. The \emph{Taub-bolt metric},
\begin{multline}
g=\frac{r^2-n^2}{(r-2n)(r-n/2)}\dd{r}^2+4n^2\frac{(r-2n)(r-n/2)}{r^2-n^2}(\dd{\psi}+\cos\theta\dd{\phi})^2\\
+(r^2-n^2)(\dd{\theta}^2+\sin^2\theta\dd{\phi}^2),
\end{multline}
is a complete, Ricci-flat metric on $\mathbb{C}P^2\setminus\{*\}$, given the right identifications in the $(\psi,\phi)$-plane (see \cite[Section~4]{Nilsson_2024}). Then, the Killing fields
\begin{equation}
\pdv{\psi}+\pdv{\phi}\qquad\text{and}\qquad\pdv{\psi}-\pdv{\phi}
\end{equation}
are both $2\pi$-periodic, and together they generate an isometric $T^2$-action. Under this action, points with $\theta=0$ and $r>2n$ have isotropy group $T^2(0,1)$, points with $\theta=\pi$ and $r>2n$ have isotropy group $T^2(1,0)$, and points with $0<\theta<\pi$ and $r=2n$ have isotropy group $T^2(1,1)$. The points $\theta=0$, $r=2n$ and $\theta=\pi$, $r=2n$, are fixed points of the $ T^2$ action, and all other points have a trivial isotropy group.
\subsection{Eguchi--Hanson Space}
Let $a>0$. The \emph{Eguchi--Hanson metric} is given by
\begin{equation}
g=\qty(1-\frac{a^4}{r^4})\frac{r^2}{4}(\dd{\psi}+\cos\theta\dd{\phi})^2+\qty(1-\frac{a^4}{r^4})^{-1}\dd{r^2}+\frac{r^2}{4}(\dd{\theta^2}+\sin^2\theta\dd{\phi^2}),
\end{equation}
where $r>a$ and $0<\theta<\pi$. Letting $r=(\tilde{r}^2+a^4)^{1/4}$ and $\psi=\tilde{\psi}+\phi$, we have
\begin{equation}
g=\frac{1}{4r^2}(\dd{\tilde{r}}^2+\tilde{r}^2\dd{\tilde{\psi}}^2)+\frac{r^2}{4}(\dd{\theta^2}+\sin^2\theta\dd{\phi^2})+O(\sin^4\theta)\dd{\phi}^2+O(\tilde{r}^2\sin^2\theta)\dd{\tilde{\psi}}\dd{\phi}.\label{eq:eguchi--hanson-regular-coordinates}
\end{equation}
This shows that $g$ can be made regular at the coordinate singularities $\theta=0$ and $r=a$ by independently giving each of the coordinates $\psi$ and $\phi$ periodicity $2\pi$.

Keeping the coordinate $\tilde{r}$, but introducing a coordinate $\hat{\psi}$ by $\psi=\hat{\psi}-\phi$, \eqref{eq:eguchi--hanson-regular-coordinates} still holds, but with $\hat{\psi}$ in place of $\tilde{\psi}$. Therefore, $g$ can also be made regular at $\theta=\pi$, with the same periodicities mentioned above. The metric $g$ is a complete Ricci-flat metric on $T^*S^2$, the cotangent bundle of the $2$-sphere (see \cite{MR0540896}).

The $2\pi$-periodic Killing fields $\pdv{\psi}$, and $\pdv{\phi}$ generate an isometric $T^2$-action, under which points with $\theta=0$ and $r>a$ have isotropy group $T^2(-1,1)$, points with $\theta=\pi$ and $r>a$ have isotropy group $T^2(1,1)$, and points with $r=a$ and $0<\theta<\pi$ have isotropy group $T^2(1,0)$. The points $\theta=0$, $r=a$ and $\theta=\pi$, $r=a$ are fixed under all of $T^2$, and all other points have a trivial isotropy group.
\subsection{Chen--Teo Space}\label{sec:chen--teo}
For parameters $\varkappa\in(0,\infty)$ and $\lambda\in(-1,1)$, let $\gamma=\sqrt{2-\lambda^2}$ and introduce the quantities \begin{equation}\kappa_E=\frac{2(\gamma-\lambda^2)}{\varkappa^2(1-\lambda^2)^2(1+\gamma)^2}\end{equation} and \begin{equation}\Omega_{1E}=-\frac{2(\gamma-\lambda^2)}{\varkappa^2(\gamma-\lambda)^2(1+\gamma)^2}.\end{equation} Defining the one-form \begin{multline}\Omega=\frac{\varkappa^2(1-\lambda^2)(1+\gamma)(x+\lambda)(y+\lambda)}{2(2+\gamma)(x-y)F(x,y)}\bigg(\\2(x+1)(y-1)(\lambda(3+\gamma)(2(x+\lambda)(y+\lambda)+(1+\gamma)(\gamma x-\gamma y-2))+3(1-\lambda^2)(x+y))\\+(1+\lambda)^3(\gamma-\lambda+2)^2(x^2-1)-(1-\lambda)^3(\gamma+\lambda+2)^2(y^2-1)\bigg)\dd{\phi},\end{multline} along with the functions $G(x)=(1-x^2)(x+\lambda)$, \begin{multline}H(x,y)=\frac{(\lambda+\gamma)(x+\lambda y)+(\lambda-\gamma)(y-\lambda x)+2xy+2\gamma\lambda^2}{4(2+\gamma)}\bigg(\\(2+\gamma)(x+\lambda)(y+\lambda)(\lambda(1+\gamma)(x+y)+(\lambda^2-\gamma)(x-y)+4+2\gamma\lambda^2-2xy)\\+(1-\lambda^2)((\gamma+\lambda+2)(x+\lambda)(x+\lambda\gamma)+(\gamma-\lambda+2)(y+\lambda)(y+\lambda\gamma))\bigg),\end{multline} and \begin{equation}F(x,y)=(x+\lambda)(y+\lambda)((1+\lambda\gamma)^2(xy+\lambda x+\lambda y+1)-2\lambda\gamma(1-\lambda)(x-1)(y-1)-(x^2-1)(y^2-1)),\end{equation} we define a metric by \begin{equation}g=\frac{F(x,y)}{H(x,y)}(\dd{\psi}+\Omega)^2-\frac{\varkappa^4 H(x,y)}{(x-y)^3}\qty(\frac{\dd{x^2}}{G(x)}-\frac{\dd{y^2}}{G(y)}+\frac{4G(x)G(y)}{(x-y)F(x,y)}\dd{\phi^2}),\end{equation} where $x<-1$ and $y>1$. The metric $g$ will be referred to as the \emph{Chen--Teo metric}.

With the identifications $(\psi,\phi)\sim(\psi+2\pi/\kappa_E,\phi+\Omega_{1E}\phi/\kappa_E)\sim(\psi,\phi+2\pi)$, the Killing fields
\begin{equation}
\frac{1}{\kappa_E}\pdv{\psi}+\frac{\Omega_{1E}}{\kappa_E}\pdv{\phi}\qquad\text{and}\qquad\pdv{\phi}
\end{equation}
become $2\pi$-periodic, and generate an isometric $T^2$-action. Introducing appropriate radial and angular coordinates and performing a Taylor expansion, reasoning as in the previous sections, one can show that $g$ can be made regular at points where $x\in\{-\infty,-1\}$ or $y\in\{1,\infty\}$, except at points where $(x,y)=(-\infty,\infty)$. This makes it into a complete Ricci-flat metric on $\mathbb{C}P^2\setminus S^1$.

Under the $T^2$-action, points where $x=-\infty$ and $y>1$ have isotropy group $T^2(0,1)$, and the same is true for points where $y=\infty$ and $x<-1$. Points where $y=1$ and $-\infty<x<-1$ have isotropy group $T^2(1,0)$, while points where $x=-1$ and $1<y<\infty$ have isotropy group $T^2(1,1)$. Points with $(x,y)\in\{(-\infty,1),(-1,1),(-1,\infty)\}$ are fixed under the whole action.}
\section{Conditions on the Topology}\label{sec:topological_conditions}
In this section, we prove Theorem \ref{thm:rod_structure_intersection_form} by explicitly reconstructing $M$ as a smooth manifold with a $T^2$-action directly from its rod structure. We then calculate the intersection form for the reconstructed model.
\begin{proof}[Proof of Theorem \ref{thm:rod_structure_intersection_form}]
Let $M_1,\dots,M_n$ denote distinct copies of $\mathbb{R}^4\cong\mathbb{C}^2$, and for each $i$, define a $T^2$-action on $M_i$ by \begin{equation}(e^{i\theta_1},e^{i\theta_2})\cdot(z_1,z_2)=(e^{i(a_{11}\theta_1+a_{12}\theta_2)}z_1,e^{i(a_{21}\theta_1+a_{22}\theta_2)}z_2),\end{equation} where \begin{equation}\mqty(a_{11}&a_{12}\\a_{21}&a_{22})=\mqty(v_{i-1}&v_i)^{-1}.\end{equation} Now let $M_i^+=\{(z_1,z_2)\in M_i\mid |z_1|>|z_2|^2+1\}$ and $M_i^-=\{(z_1,z_2)\in M_i\mid |z_2|>|z_1|^2+1\}$. Since \begin{equation}\mqty(v_i&v_{i+1})^{-1}v_i=\mqty(1\\0),\end{equation} and since $\det(\mqty(v_i&v_{i+1})^{-1}\mqty(v_{i-1}&v_i))=1$, we can write \begin{equation}\mqty(v_i&v_{i+1})^{-1}\mqty(v_{i-1}&v_i)=\mqty(-d_i&1\\-1&0)\end{equation} for some $d_i\in\mathbb{Z}$, and since $v_{i-1}=-d_iv_i-v_{i+1}$, we have $\det\mqty(v_{i-1}&v_{i+1})=-d_i\det\mqty(v_i&v_{i-1})=-d_i$. We have diffeomorphisms $F_i:M_i^+\to M_{i+1}^-$ given by \begin{equation}F_i(z_1,z_2)=\qty(\qty(\frac{z_1}{|z_1|})^{-d_i}z_2,\qty(\frac{z_1}{|z_1|})^{-1}\qty(|z_2|^2+1+\frac{1}{|z_1|-|z_2|^2-1})),\end{equation} and gluing the sets $M_i$ together along these diffeomorphisms, we obtain a manifold $M'$. It can be checked that the diffeomorphisms are equivariant with respect to the $T^2$-actions defined on the sets $M_i$ and that they are orientation-preserving, where each $M_i$ is given the standard orientation of $\mathbb{R}^4$. This makes $M'$ into an oriented manifold with a $T^2$-action. By construction, we can map the orbit space $M'/T^2$ diffeomorphically onto the orbit space $M/T^2$ in such a way that the isotropy groups match, and by \cite[Theorem~1.1]{ypkrsoujnujrtxzxhykz}, this implies that $M$ and $M'$ are equivariantly diffeomorphic. For simplicity, we therefore identify $M$ with $M'$.

{For $1\leq i\leq n-1$, define the set \begin{equation}R_i=\{(z_1,z_2)\in M_i\mid z_2=0\}\cup\{(z_1,z_2)\in M_i\mid z_1=0\}.\end{equation} This is an embedded $2$-sphere in $M$, and in particular it is a closed orientable submanifold of $M$. We give $R_i$ the orientation for which the standard coordinate vector fields $(\pdv*{x_1},\pdv*{x_2})$ for $M_i$ form a positively oriented frame for $R_i$. Then the fundamental classes $[R_1],\dots,[R_{n-1}]$ are elements of $H_2(M)$, and we claim that they form a basis.

To this end, assume that $n>1$ (otherwise, there is nothing to prove). For $1\leq i\leq n-1$, we have the Mayer--Vietoris sequence
\begin{equation}
H_2(M_i)\oplus H_2(M_{i+1})\to H_2(M_i\cup M_{i+1})\to H_1(M_i\cap M_{i+1})\to H_1(M_i)\oplus H_1(M_{i+1}).
\end{equation}
Since $M_i\cong M_{i+1}\cong\mathbb{R}^4$, the first and last terms vanish, which implies that the middle map is an isomorphism. We can decompose $R_i$ as the union of two closed disks $D_i\subseteq M_i$ and $D_{i+1}\subseteq M_{i+1}$, joined along their common boundary circle $D_i\cap D_{i+1}\subseteq M_i\cap M_{i+1}$. The middle map can be described explicitly (see, e.g.,\ \cite[p.~150]{MR1867354}), and it maps $[R_i]$ to $[D_i\cap D_{i+1}]$, where $D_i\cap D_{i+1}$ has been oriented appropriately. But the intersection $M_i\cap M_{i+1}$ deformation retracts to $D_i\cap D_{i+1}$, which shows that $\{[D_i\cap D_{i+1}]\}$ is a basis for $H_1(M_i\cap M_{i+1})$. Hence, it follows that $\{[R_i]\}$ is a basis for $H_2(M_i\cup M_{i+1})$.

Now assume that $2\leq i\leq n-1$ and put $A=M_1\cup\cdots\cup M_i$ and $B=M_i\cup M_{i+1}$, and consider the Mayer--Vietoris sequence
\begin{equation}
H_2(A\cap B)\to H_2(A)\oplus H_2(B)\to H_2(A\cup B)\to H_1(A\cap B).
\end{equation}
The first and last terms vanish because $A\cap B=M_i\cong\mathbb{R}^4$, which shows that the middle map is an isomorphism. From the previous paragraph we know that $\{[R_1]\}$ is a basis for $H_2(M_1\cup M_2)$, and by inducting on $i$ we can conclude that $\{[R_1],\dots,[R_{n-1}]\}$ is a basis for $H_2(M)$.

What remains is to compute the intersection form with respect to this basis. For $|i-j|>1$, the submanifolds $R_i$ and $R_j$ are disjoint, and thus the intersection product $[R_i]\cdot [R_j]$ vanishes. Two adjacent submanifolds $R_i$ and $R_{i+1}$ intersect in exactly one point, and they do so transversely, and from this, it follows that $[R_i]\cdot[R_{i+1}]=\pm1$. Flipping orientations of the submanifolds $R_i$ appropriately, we ensure that $[R_i]\cdot[R_{i+1}]=1$ for all $i$, noting that flipping orientation does not affect the self-intersection numbers $[R_i]\cdot[R_i]$

To calculate the self-intersection numbers $[R_i]\cdot[R_i]$, we have to perturb $R_i$ into another submanifold representative of the same homology class, which intersects $R_i$ transversely. We first consider the case where $d_i\geq0$. In this case, we take a smooth function $f:\mathbb{C}\to\mathbb{C}$ such that $f(z)=z^{d_i}-3^{-d_i}$ when $|z|\leq 2/3$, and $f(z)=(z/|z|)^{d_i}$ when $|z|\geq1$, and such that $f(z)\neq0$ when $2/3\leq|z|\leq 1$. Now define a submanifold by \begin{equation}R_i'=\{(z_1,z_2)\in M_i\mid z_2=f(z_1)\}\cup\{(z_1,z_2)\in M_{i+1}\mid z_1=1\}.\end{equation} The inclusion map $R_i'\to M$ is homotopic to an embedding whose image is $R_i$. This can be seen by considering the homotopy $R_i'\times[0,1]\to M$ which in $M_i$ is given by $((z_1,z_2),t)\mapsto(z_1,(1-t)z_2)$, and in $M_{i+1}$ is given by $((z_1,z_2),t)\mapsto((1-t)z_1,z_2)$. Endowing $R_i'$ with the appropriate orientation, this shows that $R_i'$ is homologous to $R_i$.}

Since $f$ has exactly $d_i$ roots, and since it is holomorphic with non-vanishing derivative near these roots, it follows that $R_i$ and $R_i'$ intersect transversely in $d_i$ points and that they intersect positively at each such point (with respect to the orientation of $M$). In other words, $[R_i]\cdot [R_i]=d_i$. The case where $d_i<0$ is exactly the same, except that we let $f(z)=\overline{z}^{-d_i}-3^{d_i}$ for $|z|\leq 2/3$. In that case, the function $f$ is antiholomorphic (instead of holomorphic) in that region so that $R_i$ and $R_i'$ intersect negatively at each root.
\end{proof}

\subsection{Hitchin--Thorpe Inequality}\label{sec:hitchin_thorpe_inequality}
The following definition is needed to state the results in this section.
\begin{dfn}
A Riemannian manifold $(M,g)$ is said to be \emph{hyper-Kähler} if it admits three almost complex structures $I,J$ and $K$ such that
\begin{itemize}
\item $(M,g)$ is a Kähler manifold with respect to $I,J$ and $K$ separately.
\item $I^2=J^2=K^2=-1$.
\end{itemize}\label{dfn:hyper_kaehler}
\end{dfn}
In its original form, the Hitchin--Thorpe inequality is a statement about compact Einstein $4$-manifolds. For such manifolds $M$, the inequality states that \begin{equation}2\chi(M)\geq3|\tau(M)|,\label{eq:hitchin_thorpe_compact}\end{equation} and it further states that equality occurs if and only if the universal cover of $M$ is hyper-Kähler. As a special case, this inequality applies when $M$ is Ricci-flat. In general, the inequality does not hold without modification in the non-compact case. For Ricci-flat ALE or ALF manifolds, there are, however, variants of the inequality that involve extra boundary terms. In the case of ALE manifolds, for instance, we have the following variant of the inequality, whose proof can be found in \cite[Theorem~4.2]{MR1145266}.
\begin{thm}[Hitchin--Thorpe Inequality for Ricci-Flat ALE Manifolds]
Let $(M,g)$ be an oriented Ricci-flat ALE manifold with group $\Gamma$. Then \begin{equation}2\qty(\chi(M)-\frac{1}{|\Gamma|})\geq 3|\tau(M)+\eta_S(S^3/\Gamma)|.\label{eq:hitchin_thorpe_ALE}\end{equation} Equality holds if and only if the universal cover of $M$ is hyper-Kähler.\label{thm:hitchin_thorpe_ALE}
\end{thm}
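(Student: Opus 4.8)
The plan is to adapt the classical proof of the compact Hitchin--Thorpe inequality \eqref{eq:hitchin_thorpe_compact}, in which $\chi$ and $\tau$ are written as curvature integrals via the Chern--Gauss--Bonnet and Hirzebruch signature theorems, replacing those two formulas by their versions for a complete manifold with a single ALE end. Recall that on an oriented Riemannian $4$-manifold, writing $W^{\pm}$ for the self-dual and anti-self-dual parts of the Weyl tensor, $s$ for the scalar curvature, and $\mathring{\operatorname{Ric}}$ for the trace-free Ricci tensor, the pointwise Chern--Gauss--Bonnet integrand is $\tfrac{1}{8\pi^{2}}\bigl(|W^{+}|^{2}+|W^{-}|^{2}+\tfrac{s^{2}}{24}-\tfrac{1}{2}|\mathring{\operatorname{Ric}}|^{2}\bigr)$ and the Hirzebruch integrand is $\tfrac{1}{12\pi^{2}}\bigl(|W^{+}|^{2}-|W^{-}|^{2}\bigr)$. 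The decay hypothesis $|\operatorname{Rm}|=O(r^{-2}K(r))$ from Definition \ref{dfn:gravitational_instanton}, together with the quartic volume growth of an ALE end, makes all of these integrals absolutely convergent over $M$, so the first step is to prove the two boundary-corrected identities
\begin{gather}
\chi(M)-\frac{1}{|\Gamma|}=\frac{1}{8\pi^{2}}\int_{M}\Bigl(|W^{+}|^{2}+|W^{-}|^{2}+\tfrac{s^{2}}{24}-\tfrac{1}{2}|\mathring{\operatorname{Ric}}|^{2}\Bigr)\dd{V},\\
\tau(M)+\eta_{S}(S^{3}/\Gamma)=\frac{1}{12\pi^{2}}\int_{M}\bigl(|W^{+}|^{2}-|W^{-}|^{2}\bigr)\dd{V}.
\end{gather}

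To establish these, truncate $M$ at a large radius $R$ to get a compact $4$-manifold $M_{R}$ whose boundary is diffeomorphic to $S^{3}/\Gamma$, and apply the Chern--Gauss--Bonnet theorem with boundary and the Atiyah--Patodi--Singer signature theorem to $M_{R}$. Letting $R\to\infty$ and using that the metric on the end converges to the flat model on $\mathbb{R}^{4}/\Gamma$, the boundary transgression term of Chern--Gauss--Bonnet tends to $1/|\Gamma|$ (for $\mathbb{R}^{4}$ itself it equals $\chi(B^{4})=1$, and quotienting by the deck group of order $|\Gamma|$ scales it by $1/|\Gamma|$), while the APS correction in the signature formula tends to the eta invariant $\eta_{S}(S^{3}/\Gamma)$ of the odd signature operator; the extra second-fundamental-form contributions and the difference $\tau(M_{R})-\tau(M)$ vanish in the limit. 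Here one also uses that for a Ricci-flat ALE manifold the degree-$2$ $L^{2}$-cohomology is canonically the image of $H^{2}_{c}(M)$ in $H^{2}(M)$, so that APS really computes the signature $\tau(M)$ of the statement.

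Granting the two identities, the conclusion is immediate: since $(M,g)$ is Ricci-flat, $s\equiv0$ and $\mathring{\operatorname{Ric}}\equiv0$, so taking $2$ times the first identity $\pm$ $3$ times the second gives
\begin{equation}
2\Bigl(\chi(M)-\frac{1}{|\Gamma|}\Bigr)\pm 3\bigl(\tau(M)+\eta_{S}(S^{3}/\Gamma)\bigr)=\frac{1}{2\pi^{2}}\int_{M}|W^{\pm}|^{2}\,\dd{V}\ \geq\ 0,
\end{equation}
which is exactly \eqref{eq:hitchin_thorpe_ALE}. Equality forces $\int_{M}|W^{+}|^{2}=0$ or $\int_{M}|W^{-}|^{2}=0$, i.e.\ $(M,g)$ is \emph{anti-self-dual} or \emph{self-dual}; for a Ricci-flat $4$-manifold this pins the restricted holonomy inside $\operatorname{Sp}(1)$, so the universal cover is hyper-Kähler, and conversely any hyper-Kähler metric is Ricci-flat with $W^{+}=0$ for a suitable orientation, which gives the stated equality characterization.

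The main obstacle is the second step, and in particular the passage to the limit $R\to\infty$ in the Atiyah--Patodi--Singer signature theorem: one must control the eta invariant of the boundary signature family along the exhaustion, show that the boundary and second-fundamental-form terms converge exactly to $\eta_{S}(S^{3}/\Gamma)$, and identify the signature of the truncated manifold with the $L^{2}$-signature of $M$. This is the analytic heart of the argument and is where the work of \cite[Theorem~4.2]{MR1145266} is concentrated; everything else is formal manipulation of the two curvature identities above.
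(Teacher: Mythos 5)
This theorem is not proved in the paper at all: it is quoted verbatim from \cite[Theorem~4.2]{MR1145266}, so there is no internal proof to compare against. Your sketch is, in outline, exactly the argument of that reference (and of Hitchin's original compact-case proof): write $\chi$ and $\tau$ as curvature integrals with boundary corrections via Chern--Gauss--Bonnet and Atiyah--Patodi--Singer on a truncation $M_R$, let $R\to\infty$ so that the transgression term converges to $1/|\Gamma|$ (computed from the flat model $\mathbb{R}^4/\Gamma$) and the signature defect converges to $\eta_S(S^3/\Gamma)$, and then take $2\times$ the first identity $\pm\,3\times$ the second, using Ricci-flatness to kill the $s^2$ and trace-free Ricci terms; the constants work out ($2\cdot\frac{1}{8\pi^2}=3\cdot\frac{1}{12\pi^2}=\frac{1}{4\pi^2}$) and the equality case correctly reduces to $W^+\equiv0$ or $W^-\equiv0$, hence reduced holonomy in $\mathrm{Sp}(1)$ and a hyper-K\"ahler universal cover. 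Two minor remarks: the identification of $\tau(M)$ with $\tau(M_R)$ needs no $L^2$-cohomology here, since for large $R$ the inclusion $M_R\hookrightarrow M$ is a homotopy equivalence and $\tau(M)$ is by definition the signature of the intersection form on $H_2(M)$; and the vanishing of the second-fundamental-form transgression in the signature formula is part of the convergence analysis you are (legitimately) deferring to the reference, not something that is automatic. With that caveat acknowledged, as you do, the proposal is a correct account of how the cited result is established.
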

The number $\eta_S(S^3/\Gamma)$ occurring in Theorem \ref{thm:hitchin_thorpe_ALE} is the so-called \emph{eta-invariant of the signature operator of the space form $S^3/\Gamma$}, which is a spectral invariant of this space form. Although we will not describe the eta-invariant in detail, we mention that the eta-invariant is an oriented isometry invariant of the space form $S^3/\Gamma$, and that reversing the orientation of this space form flips the sign of the eta-invariant.

{The orientation of $S^3/\Gamma$ that defines the eta-invariant is the boundary orientation on $S^3/\Gamma\cong\{R\}\times S^3/\Gamma$ when viewed as the boundary of $\{x\in\mathbb{R}^4/\Gamma\mid|x|\leq R\}$, where the latter is oriented consistently with the asymptotic diffeomorphism, giving rise to an induced orientation on $L(p,q)$. Replacing the group $\Gamma$ by its conjugate by an orientation-reversing element of $\mathrm{O}(n)$ if necessary, we may assume that the orientation is induced by the standard orientation on $\mathbb{R}^4$.

Now let $M$ be a toric ALE (with group $\Gamma$) instanton, with rod structure $(v_0,\dots,v_n)$, where we assume that $\det\mqty(v_{i-1}&v_i)=1$. Then Theorem \ref{thm:hitchin_thorpe_ALE} applies, and since $M$ is simply connected, the statement about its universal cover applies directly to $M$. From Lemma \ref{lem:boundary_at_infinity_toric_instanton}, it follows that the boundary at infinity of $M$ is $L(p,q)$, where $p=|\det\mqty(v_0&v_n)\/|$ and $q=\operatorname{sgn}(\det\mqty(v_0&v_n))\cdot\det\mqty(v_1&v_n)$, and Lemma \ref{lem:boundary_at_infinity_well_defined} implies that $S^3/\Gamma$ and $L(p,q)$ are $h$-cobordant. In particular, this implies that $\Gamma$ is a cyclic group of order $p$, and by the classification of spherical $3$-manifolds (see e.g.,\ \cite{MR0426001}), $S^3/\Gamma=L(p,q')$ for some $q'$.

Let $M$ be equipped with the orientation defined in the proof of Theorem \ref{thm:rod_structure_intersection_form}. By following the proofs of Lemmas \ref{lem:boundary_at_infinity_toric_instanton} and \ref{lem:boundary_at_infinity_toric_instanton} carefully,
one verifies that the $h$-cobordism is an \emph{oriented} $h$-cobordism, by which we mean that the cobordism $W$ admits an orientation for which the induced boundary orientation on $\partial W$ is that of $(-L(p,q))\sqcup L(p,q')$. By \cite[Theorem~1]{doig2015combinatorial}, this implies that $L(p,q)$ and $L(p,q')$ are orientation-preservingly homeomorphic. The classification of lens spaces then implies that $L(p,q)$ and $L(p,q')$ are orientation-preservingly isometric so that $\eta_S(L(p,q))=\eta_S(L(p,q'))$.} We now cite the following formula for the eta-invariants of lens spaces, whose proof can be found in \cite[Theorem~4]{MR926246}.

\begin{thm}
For $p,q\in\mathbb{Z}$ with $0\leq q<p$ and $\operatorname{gcd}(p,q)=1$, the eta-invariant of the signature operator of $L(p,q)$ is
\begin{equation}
\eta_S(L(p,q))=\frac{1}{3p}(p-1)({2pq-3p}-q+3)-\frac{2}{p}\sum_{k=1}^{q-1}\left\lfloor\frac{kp}{q}\right\rfloor^2.\label{eq:eta_invariant_lens_space}
\end{equation}\label{thm:eta_invariant_lens_space}
\end{thm}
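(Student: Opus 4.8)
The plan is to compute $\eta_S(L(p,q))$ by realizing $L(p,q)$ as the quotient $S^3/\mathbb{Z}_p$, where a generator $g$ acts on $S^3\subseteq\mathbb{C}^2$ by $(z_1,z_2)\mapsto(\zeta z_1,\zeta^q z_2)$ with $\zeta=e^{2\pi i/p}$, and applying the equivariant Atiyah--Patodi--Singer ($G$-)signature theorem to the bounding disk $D^4$, on which this action extends with a single isolated fixed point at the origin. Since $D^4$ is acyclic (its relative middle cohomology vanishes), the equivariant signatures $\operatorname{sign}(g^k,D^4)$ all vanish, so the boundary version of the $G$-signature theorem identifies the equivariant eta-invariant $\eta_{g^k}(S^3)$ with the local contribution of the fixed point. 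For the rotation numbers $(2\pi k/p,2\pi kq/p)$ that contribution is, up to the standard normalization, $-\cot(\pi k/p)\cot(\pi kq/p)$.

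First I would assemble the equivariant data. Using that the signature-operator eta-invariant of a free quotient is the average over the group of the equivariant eta-invariants upstairs, and that the $k=0$ term is $\eta_S(S^3)=0$, one obtains
\[
\eta_S(L(p,q))=\frac1p\sum_{k=0}^{p-1}\eta_{g^k}(S^3)=-\frac1p\sum_{k=1}^{p-1}\cot\!\left(\frac{\pi k}{p}\right)\cot\!\left(\frac{\pi kq}{p}\right)=-4\,s(q,p),
\]
where $s(q,p)$ is the classical Dedekind sum. Here one must check that the orientation of $L(p,q)$ is the one fixed earlier in the paper (the eta-invariant changes sign under orientation reversal) and that the normalization of $\eta_S$ matches the one used in Theorem \ref{thm:hitchin_thorpe_ALE}.

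Next I would convert this closed trigonometric expression into the finite sum in \eqref{eq:eta_invariant_lens_space}. Dedekind reciprocity, $s(q,p)+s(p,q)=-\tfrac14+\tfrac1{12}(p/q+q/p+1/(pq))$, reduces the problem to the short sum $s(p,q)=\sum_{k=1}^{q-1}((k/q))((kp/q))$, where $((\cdot))$ is the sawtooth function. Writing $\lfloor kp/q\rfloor=kp/q-(kp\bmod q)/q$, expanding $\sum_{k=1}^{q-1}\lfloor kp/q\rfloor^2$, and using that $k\mapsto kp\bmod q$ permutes $\{1,\dots,q-1\}$ (so $\sum(kp\bmod q)=\tfrac{q(q-1)}2$ and $\sum(kp\bmod q)^2=\tfrac{(q-1)q(2q-1)}6$) together with the elementary power sums in $k$, one expresses $\sum_{k=1}^{q-1}\lfloor kp/q\rfloor^2$ linearly in $s(p,q)$ and in polynomials in $p,q$. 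Substituting back through reciprocity and collecting terms yields precisely $\tfrac1{3p}(p-1)(2pq-3p-q+3)-\tfrac2p\sum_{k=1}^{q-1}\lfloor kp/q\rfloor^2$; the degenerate case $p=1$ (where both sides vanish) is checked directly.

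I expect the main obstacle to be the first step: pinning down the exact normalization constant and the orientation in the $G$-signature/APS computation so that the sign and scale of the cotangent sum are correct for the conventions used in the rest of the paper. (An alternative route, via a bounding linear-plumbing $4$-manifold and the orbifold signature theorem, runs into the same issue of making the Chern--Weil Pontryagin term topological, and ends in the same Dedekind-sum identity.) The arithmetic reduction of the last paragraph, though somewhat lengthy, is routine once the reciprocity law and the cotangent formula for $s(q,p)$ are available, and it is exactly where the hypothesis $\gcd(p,q)=1$ is used.
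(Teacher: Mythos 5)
The paper does not actually prove Theorem~\ref{thm:eta_invariant_lens_space}: it imports the formula verbatim from \cite[Theorem~4]{MR926246}, so there is no internal argument to compare yours against. That said, your sketch is essentially the standard derivation of the quoted result, and it is correct in outline. The equivariant APS/$G$-signature theorem (in Donnelly's boundary form) applied to $D^4\subseteq\mathbb{C}^2$ with the linear $\mathbb{Z}_p$-action does give $\eta_S(L(p,q))=-\tfrac1p\sum_{k=1}^{p-1}\cot(\pi k/p)\cot(\pi kq/p)=-4s(q,p)$, and the orientation the paper fixes for the eta-invariant (the boundary orientation of $\{x\in\mathbb{R}^4/\Gamma:|x|\le R\}$ induced from the standard orientation of $\mathbb{R}^4$) is precisely the one produced by your bounding-disk computation, so the sign issue you worry about resolves in your favor; note also that $s(q,p)=s(q',p)$ whenever $qq'\equiv1\pmod p$, so the ambiguity in which rotation weight sits on which coordinate is harmless. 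The arithmetic reduction also goes through as you describe: writing $\lfloor kp/q\rfloor=kp/q-r_k/q$ with $r_k=kp\bmod q$ and using that $k\mapsto r_k$ permutes $\{1,\dots,q-1\}$ gives $\sum_{k=1}^{q-1}\lfloor kp/q\rfloor^2=\tfrac{(p^2+1)(q-1)(2q-1)}{6q}-\tfrac{p(q-1)}{2}-2p\,s(p,q)$, and substituting Dedekind reciprocity turns $-4s(q,p)$ into exactly the right-hand side of \eqref{eq:eta_invariant_lens_space} (spot checks: $p=3$, $q=1$ gives $-2/9$ on both sides, $p=3$, $q=2$ gives $2/9$). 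The only genuine loose ends are the ones you flag yourself --- pinning the normalization of the isolated-fixed-point defect $-\cot(\pi k/p)\cot(\pi kq/p)$ and the metric-independence of the equivariant eta-invariants being averaged --- and both are standard. In short: a valid proof strategy, more self-contained than the paper, which simply cites the formula.
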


{By the discussion in the previous paragraph, the eta-invariant which occurs in the inequality \eqref{eq:hitchin_thorpe_ALE} for the toric ALE instanton $M$ is given by the expression on the right in \eqref{eq:eta_invariant_lens_space}, where $p$ and $q$ are given as in the previous paragraph, except that $q$ is reduced\footnote{Note that reduction of $q$ modulo $p$ does not change the lens space $L(p,q)$.} modulo $p$ in order to satisfy $0\leq q<p$.

It is also well known (and can be seen directly by considering the construction in the proof of Theorem \ref{thm:rod_structure_intersection_form}) that the Euler characteristic is given by $\chi(M)=n$, and taken together, this shows that we can express all of the quantities occurring in Theorem \ref{thm:hitchin_thorpe_ALE} directly in terms of the rod structure. Theorem \ref{thm:hitchin_thorpe_ALE} can thus be interpreted as a necessary condition that rod structures of toric ALE instantons have to satisfy.}

As mentioned, there are also variants of the Hitchin--Thorpe inequality for ALF manifolds. In the case of ALF-$A_k$, we have the following result, whose proof can be found in \cite[Theorem~6.12]{4797854091}.
\begin{thm}[Hitchin--Thorpe Inequality for Ricci-Flat ALF-$A_k$ Manifolds]
Let $(M,g)$ be an oriented Ricci-flat manifold which is ALF-$A_k$ for some integer $k$. Then \begin{equation}2\chi(M)\geq 3\qty|\tau(M)-\frac{e}{3}+\operatorname{sgn}(e)|,\label{eq:hitchin_thorpe_ALF}\end{equation} where $e=-k-1$. Equality holds if and only if the universal cover of $M$ is hyper-Kähler.\label{thm:hitchin_thorpe_ALF}
\end{thm}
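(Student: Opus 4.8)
The plan is to derive \eqref{eq:hitchin_thorpe_ALF} from the Chern--Gauss--Bonnet and Atiyah--Patodi--Singer signature formulas applied to the large compact regions $M_R=\{q\in M\mid d(p,q)\leq R\}$, where $p$ is a fixed basepoint, exploiting Ricci-flatness to render the interior integrands nonnegative, and then letting $R\to\infty$. Writing $Y_R:=\partial M_R$, these two formulas read
\[
\chi(M_R)=\frac{1}{8\pi^2}\int_{M_R}\left(|W|^2+\frac{s^2}{24}-\frac{|\mathring{\mathrm{Ric}}|^2}{2}\right)\dd{V}+\int_{Y_R}\Pi,
\qquad
\tau(M_R)=\frac{1}{3}\int_{M_R}p_1-\eta_S(Y_R),
\]
where $\Pi$ is the boundary transgression form and $p_1$ is the Pontryagin form. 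Since $(M,g)$ is Ricci-flat, $s=0$ and $\mathring{\mathrm{Ric}}=0$, so the interior Euler integrand reduces to $|W|^2=|W^+|^2+|W^-|^2$ and the interior part of the signature is $\frac{1}{12\pi^2}\int_{M_R}(|W^+|^2-|W^-|^2)\dd{V}$. Note also that for $R$ large, $M$ deformation retracts onto $M_R$ across the collar at infinity, so $\tau(M_R)=\tau(M)$ and $\chi(M_R)=\chi(M)$.

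First I would establish convergence of the interior integrals as $R\to\infty$: this follows from the curvature decay of the ALF-$A_k$ end (the quadratic decay of $|\operatorname{Rm}|$ against the cubic volume growth makes $|W|^2$ integrable). Next I would compute the limits of the two boundary terms. For the Euler term I expect $\int_{Y_R}\Pi\to0$, because the ALF end is asymptotically an $S^1$-fibration and the corresponding boundary contribution is proportional to the fiber Euler characteristic $\chi(S^1)=0$; this is exactly what distinguishes the ALF case from the ALE case, where the analogous term produces the defect $\tfrac{1}{|\Gamma|}$, and it accounts for the absence of a $\chi$-correction in \eqref{eq:hitchin_thorpe_ALF}. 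Thus in the limit $\chi(M)=\frac{1}{8\pi^2}\int_M|W|^2\dd{V}$.

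The crux of the argument, and the step I expect to be the main obstacle, is the boundary signature term $\lim_{R\to\infty}\eta_S(Y_R)$. Here $Y_R$ is a lens space carrying a metric in which the Seifert fiber of the ALF-$A_k$ end collapses, so the relevant quantity is an \emph{adiabatic limit} of the eta-invariant of the signature operator on a circle bundle of Euler number $e=-k-1$ over $S^2$. I would invoke the adiabatic-limit formula for eta-invariants (in the spirit of Atiyah--Patodi--Singer and its circle-bundle refinements due to Bismut--Cheeger and Dai): the continuous part of the limit is governed by the $\tfrac13 p_1$-transgression of the associated disc bundle and contributes the rational term $-\tfrac{e}{3}$, while the nonzero eigenvalues crossing zero as the fiber shrinks contribute an integer spectral-flow jump equal to $\operatorname{sgn}(e)$ coming from the reduced cohomology of the bundle. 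Combining, $\lim_{R\to\infty}\eta_S(Y_R)=-\tfrac{e}{3}+\operatorname{sgn}(e)$, so that $\tau(M)=\frac{1}{12\pi^2}\int_M(|W^+|^2-|W^-|^2)\dd{V}+\tfrac{e}{3}-\operatorname{sgn}(e)$. Setting $\bar\tau:=\tau(M)-\tfrac{e}{3}+\operatorname{sgn}(e)$ then gives $\bar\tau=\frac{1}{12\pi^2}\int_M(|W^+|^2-|W^-|^2)\dd{V}$.

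Assembling the pieces, I form the two standard combinations
\[
2\chi(M)\pm3\bar\tau=\frac{1}{2\pi^2}\int_M|W^\pm|^2\dd{V}\geq0,
\]
which together yield $2\chi(M)\geq3|\bar\tau|$, that is, precisely \eqref{eq:hitchin_thorpe_ALF}. For the equality case I would argue that equality forces $\int_M|W^-|^2=0$ (or $\int_M|W^+|^2=0$ for the opposite sign), so the metric is anti-self-dual (resp.\ self-dual). A Ricci-flat, anti-self-dual $4$-manifold has restricted holonomy in $\mathrm{Sp}(1)=\mathrm{SU}(2)$: the vanishing of $W^-$ together with Ricci-flatness flattens the bundle $\Lambda^-$, producing a parallel orthonormal frame of anti-self-dual $2$-forms, hence a triple of parallel complex structures satisfying the quaternion relations. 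Passing to the universal cover removes any holonomy reduction obstruction coming from $\pi_1$, giving a hyper-Kähler structure there; conversely, a hyper-Kähler universal cover forces $W^-=0$ (for the appropriate orientation) and hence equality.
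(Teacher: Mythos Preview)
The paper does not give its own proof of this theorem; it simply states the result and cites \cite[Theorem~6.12]{4797854091}. There is therefore no in-paper argument to compare your proposal against.

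That said, your outline is the standard route to such inequalities and is sound in its broad strokes. Two points are worth flagging. First, the APS signature formula on a manifold with boundary also carries a local boundary transgression term (built from the second fundamental form and the tangential curvature of $Y_R$) in addition to $\eta_S(Y_R)$; you have suppressed it, and should note that it too vanishes in the ALF limit, just as you argue for the Chern--Gauss--Bonnet boundary term. Second, and more substantively, the identification $\lim_{R\to\infty}\eta_S(Y_R)=-e/3+\operatorname{sgn}(e)$ is the heart of the matter: your description of it as a rational transgression part plus an integer spectral-flow correction via the Bismut--Cheeger/Dai adiabatic-limit machinery is the right heuristic picture, but carrying this out rigorously---and with sign conventions matching those in \eqref{eq:hitchin_thorpe_ALF}---is where essentially all of the work in the cited reference lies. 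Your equality discussion (Ricci-flat plus $W^{\mp}=0$ implies a parallel frame of (anti-)self-dual two-forms, hence hyper-K\"ahler on the universal cover) is the standard argument and is correct.
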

{Here, the asymptotic diffeomorphism $\overline{M\setminus K}\to(R,\infty)\times L(|e|,\operatorname{sgn}e)$ to be orientation-preserving. For $p\neq 1,2$, there is no orientation-preserving diffeomorphism $L(p,1)\to L(p,-1)$, which shows that purely topological arguments uniquely determine the sign of $e$ in this case. The cases $|e|=1$ and $|e|=2$ are slightly more subtle and require further consideration of how the asymptotic diffeomorphism relates to the metrics of the spaces, but in this paper, we will not consider these aspects.}

\subsection{Rod structures With Three Turning Points}\label{sec:three_turning_points}
Consider a rod structure $(v_0,v_1,v_2,v_3)$ with three turning points, satisfying $v_0=(0,1)$ and $v_1=(-1,0)$; any such rod structure can be written as in Figure \ref{fig:three_turning_points_rod_structure}, for some $a,b\in\mathbb{Z}$. For a toric gravitational instanton $(M,g)$ with this rod structure, we have $\chi(M)=3$, and the signature $\tau(M)$ is just the difference between the number of positive and negative roots, respectively, of the polynomial $\lambda^2-(a+b)\lambda+ab-1$. Assume now, in addition, that $(M,g)$ is ALE\@. As we shall see, this additional assumption restricts the possible pairs $(a,b)$. {An obvious restriction is, of course, that $p:=\det\mqty(v_0&v_3\/)=|1-ab|$ is non-zero, for otherwise, the boundary at infinity would be $S^2\times S^1$, which is incompatible with ALE geometry. Moreover, we must have $p>1$, since if $p=1$ then $M$ is AE, so the positive mass theorem (see \cite{MR0994021}) implies that $M$ is homeomorphic to $\mathbb{R}^4$, which contradicts the fact that $\chi(M)=3$.

Now let $q$ be the unique integer in the range $0\leq q<p$ which satisfies $q\equiv b\operatorname{sgn}(1-ab)\pmod{p}$. By Theorem \ref{thm:hitchin_thorpe_ALE}, we have
\begin{equation}
2\left(3-\frac{1}{p}\right)\geq 3|\tau(M)+\eta_S(L(p,q))|,\label{eq:three_turning_points_hitchin_thorpe_ALE}
\end{equation}
where $\eta_S(L(p,q))$ is given by the right hand side of \eqref{eq:eta_invariant_lens_space}.

The requirement that $p>1$, along with the inequality \eqref{eq:three_turning_points_hitchin_thorpe_ALE}, can be viewed as restrictions on the values of $a$ and $b$, and for any specific values of $a$ and $b$, it is straightforward to check whether or not they. In other words, if for some $a,b\in\mathbb{Z}$ one of these requirements does \emph{not} hold, there cannot exist any toric ALE instanton with the corresponding rod structure. By systematically checking all pairs $(a,b)$ with $|a|,|b|\leq 17$, one arrives at Figure \ref{fig:three_turning_points_ALE_possibilities}. The pairs $(a,b)=\pm(2,2)$ (marked with red) are the pairs for which exact equality holds in \eqref{eq:three_turning_points_hitchin_thorpe_ALE}. They correspond to the unique hyper-Kähler instanton on the minimal resolution of $\mathbb{C}^2/\mathbb{Z}_3$ (see \cite{MR0992335}), equipped with two distinct $T^2$-actions.

Consider, now, the same question for ALF: Given $a,b\in\mathbb{Z}$, does there exist a toric ALF instanton $(M,g)$ with the rod structure in Figure \ref{fig:three_turning_points_possibilities}? Such a manifold $M$ is necessarily of the type ALF-$A_k$ With the same definitions of $p$ and $q$ as before, and with $e=-k-1$, we have $|e|=p$, and either $q=1$ or $q=p-1$ holds. Furthermore, Theorem \ref{thm:hitchin_thorpe_ALF} implies that
\begin{equation}
6\geq 3\left|\tau(M)-\frac{e}{3}+\operatorname{sgn}(e)\right|.\label{eq:three_turning_points_hitchin_thorpe_ALF}
\end{equation}
When $p\neq1,2$, the sign of $e$ is uniquely determined, and is given by
\begin{equation}
e=
\begin{cases}
-p,&q=1,\\
p,&q=p-1,
\end{cases}
\end{equation}
while in the cases $p=1$ and $p=2$, the sign $e$ is ambiguous. Nonetheless, in the latter cases, \eqref{eq:three_turning_points_hitchin_thorpe_ALF} must hold with either $e=p$ \emph{or} $e=-p$.

If equality holds in \eqref{eq:three_turning_points_hitchin_thorpe_ALF}, then $M$ is hyper-Kähler, and by the classification of hyper-Kähler ALF instantons, $M$ is a triple-Taub--NUT instanton (see \cite{MR2818855}). The triple-Taub--NUT instanton has Euler characteristic equal to $p$, which implies that $p=3$, and it has an even intersection form, i.e.\ $d_1=a$ and $d_2=b$ are even.

Again, the preceding statements restrict the possible values of $a$ and $b$. Using the same method to compute the signature $\tau(M)$ as before, systematically checking all pairs $(a,b)$ with $|a|,|b|\leq 17$, one arrives at Figure \ref{fig:three_turning_points_ALF_possibilities}. As for the
ALE case, the pairs $(a,b)=\pm(2,2)$ (marked with red) are the pairs for which exact equality holds in \eqref{eq:three_turning_points_hitchin_thorpe_ALF}. They correspond to two
distinct $T^{2}$-actions on the triple-Taub--NUT instanton. Furthermore, the pairs given by $(a,b)=\pm(1,1)$ (marked with blue) correspond to two distinct $T^{2}$-actions on the Chen--Teo instanton.

\begin{proof}[Proof of Theorem \ref{thm:three_turning_points_possibilities}]
For the ALF statement, \eqref{eq:three_turning_points_hitchin_thorpe_ALF} implies that
\begin{equation}
6\geq |e|-3|\tau(M)+\operatorname{sgn}(e)|\geq |e|-9,
\end{equation}
so that
\begin{equation}
|ab|\leq 1+|1-ab|=1+|e|\leq 16.
\end{equation}
Thus, either $|a|,|b|\leq 16$, in which case $(a,b)$ is one of the points in Figure \ref{fig:three_turning_points_ALF_possibilities}, or either $a$ or $b$ vanishes.

We now prove the ALE statement. Since the transformation $(a,b)\mapsto(-a,-b)$ corresponds to $(p,q)\mapsto(p,p-q)$, and since the latter corresponds to an orientation reversal of the lens space $L(p,q)$, it follows that $(a,b)\mapsto(-a,-b)$ amounts to a sign reversal of $\eta_S(L(p,q))$. The transformation $(a,b)\mapsto(-a,-b)$ also leads to a sign reversal of $\tau(M)$, and thus preserves the inequality \eqref{eq:three_turning_points_hitchin_thorpe_ALE}. Since the conclusion of the theorem is also preserved under this transformation, we may assume without loss of generality that $a\leq 0$. Moreover, since $p>1$, we must in fact have $a<0$ and $b\neq 0$. 

First assume that $b>0$; then $p=1-ab$, and since $0<b<1-ab$, we have $q=b$. For every integer $k$ with $1\leq k\leq q-1$, we have
\begin{equation}
\left\lfloor\frac{kp}{q}\right\rfloor=\left\lfloor\frac{k}{b}-ka\right\rfloor=-ka,
\end{equation}
and by \eqref{eq:eta_invariant_lens_space}, this implies that
\begin{equation}
\eta_S(L(p,q))=-\frac{ab(a+b)}{3(1-ab)}.
\end{equation}
Since the constant coefficient of the polynomial $\lambda^2-(a+b)\lambda+ab-1$ is negative, the roots have different signs, which means that $\tau(M)=0$, and \eqref{eq:three_turning_points_hitchin_thorpe_ALE} now implies that
\begin{equation}
6-\frac{2}{1-ab}\geq\frac{|ab(a+b)|}{1-ab},
\end{equation}
or equivalently,
\begin{equation}
(|a+b|-6)|ab|\leq 4.
\end{equation}
This means that either $|a+b|\leq 6$, in which case we are done, or $|ab|\leq 4$. In the latter case,
\begin{equation}
(a+b)^2=a^2+b^2+2ab\leq\frac{16}{b^2}+\frac{16}{a^2}+8\leq40,
\end{equation}
which also implies that $|a+b|\leq 6$.

Consider now the case $a,b<-1$, for which $p=ab-1$ and $q=-b$. For $1\leq k\leq q-1$ we have
\begin{equation}
\left\lfloor\frac{kp}{q}\right\rfloor=\left\lfloor\frac{k}{b}-ka\right\rfloor=-ka-1,
\end{equation}
so \eqref{eq:eta_invariant_lens_space} implies that
\begin{equation}
\eta_S(L(p,q))=\frac{a^2b+ab^2}{3(ab-1)}+2.
\end{equation}
One readily verifies that $\tau(M)=-2$, and thus \eqref{eq:three_turning_points_hitchin_thorpe_ALE} implies that
\begin{equation}
6-\frac{2}{ab-1}\geq\frac{|a^2b+ab^2|}{ab-1},
\end{equation}
or equivalently,
\begin{equation}
6-\frac{8}{ab}\geq|a+b|.
\label{eq:unnamed_equation_1}
\end{equation}
In particular, $|a+b|<6$, so that $(a,b)$ is either $(-2,-2)$, $(-2,-3)$ or $(-3,-2)$. As the latter two do not satisfy \eqref{eq:unnamed_equation_1}, we must have $(a,b)=(-2,-2)$.

In the case $a=-1$, $b<-1$, we have $\tau=-2$, $p=-b-1$ and $q=1$, and
\begin{equation}
\eta_S(L(p,q))=\frac{(b+2)(b+3)}{3(b+1)},
\end{equation}
so \eqref{eq:three_turning_points_hitchin_thorpe_ALE} implies that
\begin{equation}
-6b-8\geq|(1-b)b|,
\end{equation}
which is a contradiction. The case $a<-1$, $b=-1$ is almost the same, the only difference being that $a$ occurs in place of $b$ in the expressions for $p$ and $\eta_S(L(p,q))$.
\end{proof}}
\begin{rem}
One can also look at rod structure with four turning points. As in the case with three turning points, we can assume that $v_0=(0,1),v_1=(-1,0)$. Such a rod structure will have the form shown in Figure \ref{fig:four_turning_points}, with three integer parameters $a,b,c$. Restricting attention to the AF case, we are left with two families of rod structures, shown in Figure \ref{fig:four_turning_points_AF_families}, along with four exceptional rod structures, shown in Figure \ref{fig:four_turning_points_exceptional_AF_rod_structures}. For AF rod structures with four turning points, it then turns out that the Hitchin--Thorpe inequality is always satisfied with strict inequality. Thus, we cannot rule out any of these rod structures.

Since the inequalities are strict, we can at least conclude that a simply connected AF toric gravitational instanton with four turning points cannot be hyper-Kähler. However, this is already known: by the classification of ALF-$A_k$ hyper-Kähler gravitational instantons, any AF hyper-Kähler gravitational instanton must be a product of $\mathbb{R}^3$ with a circle, which is not simply connected.

In other words, the Hitchin--Thorpe inequality gives no information about AF rod structures with four turning points.
\end{rem}
\begin{figure}
\centering
\includegraphics[width=\textwidth]{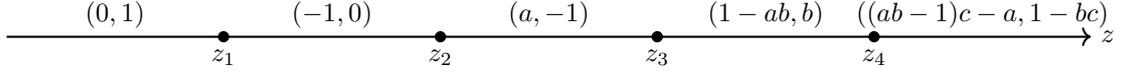}
\caption{A general rod structure with four turning points.}\label{fig:four_turning_points}
\end{figure}
\begin{figure}
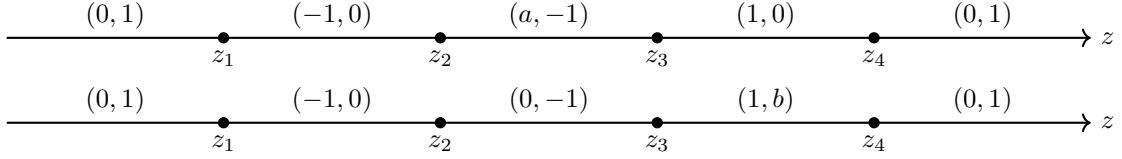

\centering
\includegraphics[width=\textwidth]{four_turning_points_AF_rod_structure_1.tikz}
\includegraphics[width=\textwidth]{four_turning_points_AF_rod_structure_2.tikz}
\caption{Two families of AF rod structures with four turning points.}\label{fig:four_turning_points_AF_families}
\end{figure}
\begin{figure}
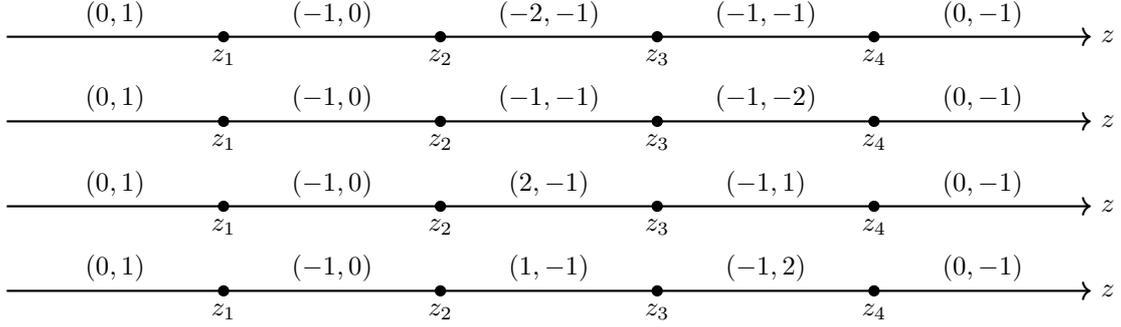

\centering
\includegraphics[width=\textwidth]{four_turning_points_AF_exceptional_1.tikz}
\includegraphics[width=\textwidth]{four_turning_points_AF_exceptional_2.tikz}
\includegraphics[width=\textwidth]{four_turning_points_AF_exceptional_3.tikz}
\includegraphics[width=\textwidth]{four_turning_points_AF_exceptional_4.tikz}
\caption{Four exceptional AF rod structures with four turning points.}\label{fig:four_turning_points_exceptional_AF_rod_structures}
\end{figure}
\bibliographystyle{plain}
\bibliography{main}
\end{document}